\renewcommand\eqref[1]{(\ref{#1})} 
\title[Weighted $L^{p}$-Hardy and $L^{p}$-Rellich inequalities]{Weighted $L^{p}$-Hardy and $L^{p}$-Rellich inequalities with boundary terms
	on stratified Lie groups}
\author{Michael Ruzhansky}
\address{Imperial College London, London, UK}
\email{m.ruzhansky@imperial.ac.uk}
\author{Bolys Sabitbek}
\address{Institute of Mathematics and Mathematical Modeling \\
	and Al-Farabi Kazakh National University, Almaty, Kazakhstan}
\email{sabitbek@math.kz}
\author{Durvudkhan Suragan}
\address{Institute of Mathematics and Mathematical Modeling, Almaty, Kazakhstan}
\email{suragan@math.kz}
\subjclass{35A23, 35H20.}
\keywords{Stratified Lie group, Carnot group, Hardy inequality, Rellich inequality, uncertainty principle, Caffarelli-Kohn-Nirenberg inequality.}
\newtheoremstyle{theorem}
{10pt}          
{10pt}  
{\sl}  
{\parindent}     
{\bf}  
{. }    
{ }    
{}     
\theoremstyle{theorem}
\numberwithin{equation}{section}
\theoremstyle{plain}
\newtheorem{thm}{Theorem}[section]
\newtheorem{prop}[thm]{Proposition}
\newtheorem{cor}[thm]{Corollary}
\newtheorem{lem}[thm]{Lemma}
\theoremstyle{definition}
\newtheorem{rem}[thm]{Remark}
\newcommand{\G}{\mathbb G}
\newtheoremstyle{defi}
{10pt}          
{10pt}  
{\rm}  
{\parindent}     
{\bf}  
{. }    
{ }    
{}     
\theoremstyle{defi}
\begin{document}

\thanks{The first author was supported by the EPSRC Grant 
EP/R003025/1 and by 
 the Leverhulme Research Grant RPG-2017-151. No new data was collected or generated during the course of this research.}
\date{\today}

	\begin{abstract}
		In this paper, generalised weighted $L^p$-Hardy, $L^p$-Caffarelli-Kohn-Ni\-ren\-berg, and $L^p$-Rellich inequalities with boundary terms  are obtained on stratified Lie groups. As consequences, most of the Hardy type inequalities and Heisenberg-Pauli-Weyl type uncertainty principles on stratified groups are recovered.  Moreover, a weighted $L^2$-Rellich type inequality with the boundary term is obtained.
	\end{abstract}
	
	\maketitle
	\section{Introduction}
	
	Let $\mathbb{G}$ be a stratified Lie group (or a homogeneous Carnot group), with dilation structure $\delta_{\lambda}$ and Jacobian generators $X_{1},\ldots,X_{N}$, so that $N$ is the dimension of the first stratum of $\mathbb{G}$.  We refer to \cite{Folland}, or to the recent books  \cite{BLU} or \cite{FR} for extensive discussions of stratified Lie groups and their properties. 
Let $Q$ be the homogeneous dimension of $\G$.	
	The sub-Laplacian on $\mathbb{G}$ is given by
	\begin{equation}\label{sublap}
	\mathcal{L}=\sum_{k=1}^{N}X_{k}^{2}.
	\end{equation}
It was shown by Folland \cite{Folland} that the sub-Laplacian has a unique fundamental solution $\varepsilon$,
	\begin{equation*}
		\mathcal{L} \varepsilon = \delta,
	\end{equation*}
	where $\delta$ denotes the Dirac distribution with singularity at the neutral element $0$ of $\G$. The fundamental solution $\varepsilon(x,y)=\varepsilon(y^{-1}x)$ is homogeneous of degree $-Q+2$ and can be written in the form
	\begin{equation}\label{EQ:Fuso}
		\varepsilon(x,y) = [d(y^{-1}x)]^{2-Q},
	\end{equation}
for some homogeneous $d$ which is called the $\mathcal{L}$-gauge.
Thus, the $\mathcal{L}$-gauge is a symmetric homogeneous (quasi-) norm on the stratified group $\G = (\mathbb{R}^n,\circ,\delta_{\lambda})$, that is,
	\begin{itemize}
		\item $d(x) >0$ if and only if $x \neq 0$,
		\item $d(\delta_{\lambda}(x))=\lambda d(x)$ for all $\lambda>0$ and $x \in \G$,
		\item $d(x^{-1})=d(x)$ for all $x \in \G$.
	\end{itemize}
	We also recall that the standard Lebesque measure $dx$ on $\mathbb R^{n}$ is the Haar measure for $\mathbb{G}$ (see, e.g. \cite[Proposition 1.6.6]{FR}).
	The left invariant vector field $X_{j}$ has an explicit form and satisfies the divergence theorem,
	see e.g. \cite{FR} for the derivation of the exact formula: more precisely, we can write
	\begin{equation}\label{Xk0}
	X_{k}=\frac{\partial}{\partial x'_{k}}+
	\sum_{l=2}^{r}\sum_{m=1}^{N_{l}}a_{k,m}^{(l)}(x',...,x^{(l-1)})
	\frac{\partial}{\partial x_{m}^{(l)}},
	\end{equation}
	with $x=(x',x^{(2)},\ldots,x^{(r)})$, where $r$ is the step of $\G$ and 
	$x^{(l)}=(x^{(l)}_1,\ldots,x^{(l)}_{N_l})$ are the variables in the $l^{th}$ stratum,
	see also \cite[Section 3.1.5]{FR} for a general presentation.
	The horizontal gradient is given by
	$$\nabla_{\G}:=(X_{1},\ldots, X_{N}),$$
	and the horizontal divergence is defined by
	$${\rm div}_{\G} v:=\nabla_{\G}\cdot v.$$
	The horizontal $p$-sub-Laplacian is defined by 
	\begin{equation}\label{pLap}
	\mathcal{L}_{p}f:={\rm div}_{\G}(|\nabla_{\G}f|^{p-2}\nabla_{\G}f),\quad 1<p<\infty,
	\end{equation}
	and we will write
	$$|x'|=\sqrt{x'^{2}_{1}+\ldots+x'^{2}_{N}}$$ for the Euclidean norm on $\mathbb{R}^{N}.$
	
	Throughout this paper $\Omega\subset\mathbb{G}$ will be an admissible domain, that is, an open set $\Omega\subset\mathbb{G}$ is called an {\em admissible domain} if it is bounded and if its boundary $\partial\Omega$ is piecewise smooth and simple i.e., it has no self-intersections.
	The condition for the boundary to be simple amounts to $\partial\Omega$ being orientable.
	
	We now recall the divergence formula in the form of \cite[Proposition 3.1]{RS17b}. 
	Let $f_{k}\in C^{1}(\Omega)\bigcap C(\overline{\Omega}),\,k=1,\ldots,N$.
	Then for each $k=1,\ldots,N,$ we have 
	\begin{equation}\label{EQ:S1}
	\int_{\Omega}X_{k}f_{k}dz=
	\int_{\partial\Omega}f_{k} \langle X_{k},dz\rangle.
	\end{equation}
	Consequently, we also have 
	\begin{equation}\label{EQ:S2}
	\int_{\Omega}\sum_{k=1}^{N}X_{k}f_{k}dz=
	\int_{\partial\Omega}
	\sum_{k=1}^{N} f_{k}\langle X_{k},dz\rangle.
	\end{equation}
	Using the divergence formula analogues of Green's formulae were obtained in \cite{RS17b} for general Carnot groups and in \cite{Ruzhansky-Suragan:squares} for more abstract settings (without the group structure), for another formulation see also \cite{CGH08}.
	
	The analogue of Green's first formula for the sub-Laplacian was given in \cite{RS17b} in the following form: if $ v \in C^1(\Omega)\cap C(\overline{\Omega})$ and $u \in  C^2(\Omega)\cap C^1(\overline{\Omega})$, then
	\begin{equation} \label{g1}
	\int_{\Omega}\left((\mathcal{\widetilde{\nabla}}v) u +v\mathcal{L}u\right) dz=\int_{\partial\Omega}v\langle \mathcal{\widetilde{\nabla} }u,dz\rangle,
	\end{equation}
	where 
	\begin{equation*}
		\widetilde{\nabla} u = \sum_{k=1}^{N} (X_k u)X_k,
	\end{equation*}
	and 
	\begin{equation*} 
		\int_{\partial \Omega} \sum_{k=1}^{N} \langle v X_k u X_k,dz \rangle= \int_{\partial \Omega} v \langle \widetilde{\nabla} u, dz \rangle.
	\end{equation*}
	Rewriting \eqref{g1} we have
	$$\int_{\Omega}\left((\mathcal{\widetilde{\nabla} }u) v+u\mathcal{L}v\right)dz=\int_{\partial\Omega}u\langle \mathcal{\widetilde{\nabla} }v,dz\rangle,$$
	$$\int_{\Omega}\left((\mathcal{\widetilde{\nabla} }v) u+v\mathcal{L}u\right)dz=\int_{\partial\Omega}v\langle \mathcal{\widetilde{\nabla} }u,dz\rangle.$$
	By using $(\mathcal{\widetilde{\nabla} }u) v=(\mathcal{\widetilde{\nabla} }v) u$ and subtracting one identity for the other we get Green's second formula for the sub-Laplacian:
	\begin{equation}\label{g2}
	\int_{\Omega}(u\mathcal{L}v-v\mathcal{L}u)dz
	=\int_{\partial\Omega}(u\langle\widetilde{\nabla}  v,dz\rangle-v\langle \widetilde{\nabla}  u,dz\rangle).
	\end{equation}
	
	It is important to note that the above Green's formulae also hold for the fundamental solution of the sub-Laplacian as in the case of the fundamental solution of the (Euclidean) Laplacian since both have the same behaviour near the singularity $z=0$ (see \cite[Proposition 4.3]{ARS}).
	
	Weighted Hardy and Rellich inequalities in different related contexts have been recently considered in \cite{KY} and \cite{GKY}. For the general importance of such inequalities we can refer to \cite{Balinsky}. Some boundary terms have appeared in \cite{WZ}.
	
	The main aim of this paper is to give the generalised weighted $L^p$-Hardy and $L^p$-Rellich type inequalities on stratified groups. 
	In Section \ref{SEC:Lpw}, we present a weighted $L^p$-Caffarelli-Kohn-Nirenberg type inequality with boundary term on stratified group $\G$, which implies, in particular, the weighted $L^p$-Hardy type inequality. As consequences of those inequalities, we recover most of the known Hardy type inequalities and Heisenberg-Pauli-Weyl type uncertainty principles on stratified group $\G$ (see \cite{Ruzhansky-Suragan:ProcA} for discussions in this direction). 
	In Section \ref{SEC:BSS3}, a weighted $L^p$-Rellich type inequality is investigated. Moreover, a weighted $L^2$-Rellich type inequality with the boundary term is obtained together with its  consequences.  
	
	Usually, unless we state explicitly otherwise, the functions $u$ entering all the inequalities are complex-valued.

	\section{Weighted $L^p$-Hardy type inequalities with boundary terms and their consequences}
	\label{SEC:Lpw}
	
In this section we derive several versions of the $L^p$ weighted Hardy inequalities. 	
	
	\subsection{Weighted $L^p$-Cafferelli-Kohn-Nirenberg type inequalities with boundary terms}
	We first present the following weighted $L^p$-Cafferelli-Kohn-Nirenberg type inequalities with boundary terms on the stratified Lie group $\G$ and then discuss their consequences. The proof of Theorem \ref{CKN_thm} is analogous to the proof of Davies and Hinz \cite{DavHinz}, but is now carried out  in the case of the stratified Lie group $\G$. The boundary terms also give new addition to the Euclidean results in \cite{DavHinz}. The classical Caffarelli-Kohn-Nirenberg inequalities in the Euclidean setting were obtained in \cite{CKN_bib}.
	
	Let $\G$ be a stratified group with $N$ being the dimension of the first stratum, and let $V$ be a real-valued function in $L_{loc}^1(\Omega)$ with partial derivatives of order up to 2 in $L_{loc}^1(\Omega)$, and such that $\mathcal{L} V$ is of one sign. Then we have:

	\begin{thm}\label{CKN_thm}
		Let $\Omega$ be an admissible domain in the  stratified group $\mathbb{G}$, and let $V$ be a real-valued function such that $\mathcal{L} V<0$ holds a.e. in $\Omega$. Then for any complex-valued  $u \in C^2(\Omega)\cap C^1(\overline{\Omega})$, and all $1<p<\infty$, we have the inequality 
		
		\begin{equation}\label{CKN_term}
		\left\| |\mathcal{L} V|^{\frac{1}{p}} u\right\|_{L^p(\Omega)}^p  \leq p  \left\| \frac{|\nabla_{\G} V|}{|\mathcal{L} V|^{\frac{p-1}{p}}}|\nabla_{\G} u| \right\|_{L^p(\Omega)} \left\||\mathcal{L}V|^{\frac{1}{p}} u  \right\|_{L^p(\Omega)}^{p-1} -\int_{\partial \Omega} |u|^p\langle\widetilde{\nabla} V,dx\rangle.
		\end{equation}
	\end{thm}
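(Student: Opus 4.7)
The plan is to combine Green's first formula \eqref{g1} with Hölder's inequality, following the scheme of Davies--Hinz but with careful attention to the boundary integral and to the non-Euclidean setting.

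First I would apply Green's first formula \eqref{g1} to the pair of functions $V$ (playing the role of $u$) and $|u|^p$ (playing the role of $v$). This yields
\begin{equation*}
\int_\Omega |u|^p\,\mathcal{L}V\,dz \;=\; \int_{\partial\Omega} |u|^p\,\langle\widetilde{\nabla} V,dz\rangle \;-\; \int_\Omega (\widetilde{\nabla}|u|^p)\,V\,dz,
\end{equation*}
and since $(\widetilde{\nabla}|u|^p)\cdot V = \sum_k (X_k|u|^p)(X_k V) = \nabla_\G|u|^p\cdot\nabla_\G V$ with $\nabla_\G|u|^p = p|u|^{p-2}\,\mathrm{Re}(\bar u\,\nabla_\G u)$, the right-hand integrand becomes $p|u|^{p-2}\mathrm{Re}(\bar u\,\nabla_\G u\cdot\nabla_\G V)$. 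Using the hypothesis $\mathcal{L}V<0$ (so $|\mathcal{L}V|=-\mathcal{L}V$) and bounding the real part by the absolute value, I obtain
\begin{equation*}
\int_\Omega |\mathcal{L}V|\,|u|^p\,dz \;\le\; -\int_{\partial\Omega}|u|^p\langle\widetilde{\nabla} V,dz\rangle \;+\; p\int_\Omega |u|^{p-1}|\nabla_\G u|\,|\nabla_\G V|\,dz.
\end{equation*}

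Second, I would split the last integrand as
\begin{equation*}
|u|^{p-1}|\nabla_\G u|\,|\nabla_\G V| \;=\; \Bigl(\tfrac{|\nabla_\G V|}{|\mathcal{L}V|^{(p-1)/p}}\,|\nabla_\G u|\Bigr)\cdot \bigl(|\mathcal{L}V|^{(p-1)/p}|u|^{p-1}\bigr)
\end{equation*}
and apply Hölder's inequality with exponents $p$ and $p/(p-1)$. This gives exactly the product
\begin{equation*}
\Bigl\|\tfrac{|\nabla_\G V|}{|\mathcal{L}V|^{(p-1)/p}}|\nabla_\G u|\Bigr\|_{L^p(\Omega)}\;\bigl\||\mathcal{L}V|^{1/p}u\bigr\|_{L^p(\Omega)}^{p-1},
\end{equation*}
and substituting into the previous display yields \eqref{CKN_term}.

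The main technical obstacle is that $|u|^p$ is not in general $C^2$, which is what a strict reading of Green's identity \eqref{g1} demands (it fails at zeros of $u$ when $p<2$, and more generally the second derivatives are not classical). I would handle this by a standard regularisation: replace $|u|^p$ by $(|u|^2+\varepsilon)^{p/2}$, for which $X_k(|u|^2+\varepsilon)^{p/2} = p(|u|^2+\varepsilon)^{(p-2)/2}\mathrm{Re}(\bar u\,X_k u)$ is smooth, run the argument above to obtain a uniform-in-$\varepsilon$ bound (using $|\mathrm{Re}(\bar u\nabla_\G u)|\le(|u|^2+\varepsilon)^{1/2}|\nabla_\G u|$), and pass to the limit $\varepsilon\to 0$ by dominated convergence in both the interior integrals and the boundary integral, which is justified since $u\in C^1(\overline\Omega)$ makes all integrands bounded. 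Apart from this regularisation, every step is a direct transcription of the Euclidean Davies--Hinz argument using the stratified Green identity \eqref{g1} in place of its Euclidean counterpart; the boundary term is simply retained rather than dropped.
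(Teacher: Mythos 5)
Your argument is essentially the paper's own proof: Green's first formula \eqref{g1} applied to $V$ and a regularisation of $|u|^p$, the pointwise bounds $\nabla_{\G} V\cdot\nabla_{\G}|u|^p\le p\,|u|^{p-1}|\nabla_{\G} u|\,|\nabla_{\G} V|$ (via $|\nabla_{\G}|u||\le|\nabla_{\G} u|$), H\"older with exponents $p$ and $p/(p-1)$, and passage to the limit in $\varepsilon$. The only difference is cosmetic: the paper regularises with $\upsilon_{\varepsilon}^p$, $\upsilon_{\varepsilon}=(|u|^2+\varepsilon^2)^{1/2}-\varepsilon$, so that $0\le\upsilon_{\varepsilon}\le|u|$ vanishes where $u$ does; if you keep $(|u|^2+\varepsilon)^{p/2}$ you should subtract the constant $\varepsilon^{p/2}$, since otherwise the left-hand side picks up a term comparable to $\varepsilon^{p/2}\int_{\Omega}|\mathcal{L}V|\,dx$, which need not be finite for the singular weights $V$ to which the theorem is later applied.
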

	Note that if $u$ vanishes on the boundary $\partial \Omega$, then \eqref{CKN_term} extends the Davies and Hinz result \cite{DavHinz} to the weighted $L^p$-Hardy type inequality on stratified groups:
	\begin{equation}\label{Hardy}
	\left\| |\mathcal{L} V|^{\frac{1}{p}} u\right\|_{L^p(\Omega)}  \leq p  \left\| \frac{|\nabla_{\G} V|}{|\mathcal{L} V|^{\frac{p-1}{p}}}|\nabla_{\G} u| \right\|_{L^p(\Omega)}, \quad 1<p<\infty.
	\end{equation}
	
	\begin{proof}[Proof of Theorem \ref{CKN_thm}]
		Let  $\upsilon_{\epsilon}:=(|u|^2 + \epsilon^2)^{\frac{1}{2}}-\epsilon$. Then $\upsilon_{\epsilon}^p \in C^2({\Omega}) \cap C^1({\overline{\Omega}})$ and using Green's first formula \eqref{g1} and the fact that $\mathcal{L} V < 0$ we get 
		\begin{align*}
			\int_{\Omega}|\mathcal{L} V|\upsilon_{\epsilon}^p dx &= - \int_{\Omega} \mathcal{L} V \upsilon_{\epsilon}^p dx \\
			&= \int_{\Omega} (\widetilde{\nabla}V) \upsilon_{\epsilon}^p dx - \int_{\partial \Omega}\upsilon_{\epsilon}^p \langle\widetilde{\nabla} V,dx\rangle  \\
			& = \int_{\Omega} \nabla_{\G} V \cdot \nabla_{\G} \upsilon_{\epsilon}^p dx - \int_{\partial \Omega}\upsilon_{\epsilon}^p \langle\widetilde{\nabla} V,dx\rangle \\
			& \leq \int_{\Omega} |\nabla_{\G} V| |\nabla_{\G} \upsilon_{\epsilon}^p| dx - \int_{\partial \Omega}\upsilon_{\epsilon}^p \langle\widetilde{\nabla} V,dx\rangle \\
			& = p \int_{\Omega} \left(\frac{|\nabla_{\G} V|}{|\mathcal{L} V|^{\frac{p-1}{p}}}\right) |\mathcal{L} V|^{\frac{p-1}{p}} \upsilon_{\epsilon}^{p-1}|\nabla_{\G} \upsilon_{\epsilon}|dx- \int_{\partial \Omega}\upsilon_{\epsilon}^p \langle\widetilde{\nabla} V,dx\rangle,
		\end{align*}
		where $(\widetilde{\nabla}u)v = \nabla_{\G} u\cdot \nabla_{\G} v$. We have
		\begin{equation*}
			\nabla_{\G} \upsilon_{\epsilon} = (|u|^2 +\epsilon^2)^{-\frac{1}{2}} |u|\nabla_{\G} |u|,
		\end{equation*}
		since $0 \leq \upsilon_{\epsilon} \leq |u|$. Thus,
		\begin{equation*}
			\upsilon_{\epsilon}^{p-1} |\nabla_{\G} \upsilon_{\epsilon}| \leq |u|^{p-1}|\nabla_{\G}|u||.
		\end{equation*}
		On the other hand, 	let $u(x)=R(x)+iI(x)$, where $R(x)$ and $I(x)$ denote the real and imaginary parts of $u$. We can restrict to the set where $u\neq 0$. Then  we have
		\begin{equation}\label{eq1}
		(\nabla_{\G} |u|)(x) = 
		\frac{1}{|u|} (R(x)\nabla_{\G} R(x) + I(x)\nabla_{\G} I(x)) \quad \text{if} \quad u \neq 0.
		\end{equation}
		Since 
		\begin{equation}\label{eq3}
		\left| \frac{1}{|u|} (R\nabla_{\G} R+I \nabla_{\G} I)\right|^2 \leq |\nabla_{\G} R|^2 + |\nabla_{\G} I|^2,
		\end{equation}
		we get that  $|\nabla_{\G} |u||\leq |\nabla_{\G} u|$ a.e. in $\Omega$.
		Therefore,
		\begin{multline*}
			\int_{\Omega}|\mathcal{L} V|\upsilon_{\epsilon}^p dx \leq p \int_{\Omega} \left(\frac{|\nabla_{\G} V|}{|\mathcal{L} V|^{\frac{p-1}{p}}} |\nabla_{\G} u|\right)|\mathcal{L} V|^{\frac{p-1}{p}}|u|^{p-1} dx- \int_{\partial \Omega}\upsilon_{\epsilon}^p \langle\widetilde{\nabla} V,dx\rangle \\
			 \leq p \left( \int_{\Omega} \left(\frac{|\nabla_{\G} V|^p}{|\mathcal{L} V|^{(p-1)}} |\nabla_{\G} u|^p\right)dx \right)^{\frac{1}{p}} \left(\int_{\Omega}|\mathcal{L} V||u|^p dx\right )^{\frac{p-1}{p}}- \int_{\partial \Omega}\upsilon_{\epsilon}^p \langle\widetilde{\nabla} V,dx\rangle,
		\end{multline*}
		where we have used H\"older's inequality in the last line. Thus, when  $\epsilon \rightarrow 0$, we obtain \eqref{CKN_term}.
	\end{proof}

	\subsection{Consequences of Theorem \ref{CKN_thm}}
	
	As consequences of Theorem \ref{CKN_thm}, we can derive the horizontal $L^p$-Caffarelli-Kohn-Nirenberg type inequality with the boundary term on the stratified group $\G$ which also gives another proof of $L^p$-Hardy type inequality, and also yet another proof of the Badiale-Tarantello conjecture \cite{BT} (for another proof see e.g. \cite{RS17a} and references therein).
	
	\subsubsection{Horizontal $L^p$-Caffarelli--Kohn--Nirenberg inequalities with the boundary term}
	\begin{cor}\label{cor1}
		Let $\Omega$ be an admissible domain in a stratified group $\G$ with $N\geq 3$ being dimension of the first stratum, and let $\alpha, \beta \in \mathbb{R}$. Then for all $u \in C^2(\Omega\backslash \{x'=0\})\cap C^1(\overline{\Omega} \backslash \{x'=0\})$, and any $1<p<\infty$, we have
		\begin{equation}\label{ex1}
			\frac{|N-\gamma |}{p} \left\| \frac{u}{|x'|^{\frac{\gamma}{p}}}\right\|^p_{L^p(\Omega)} \leq \left\| \frac{\nabla_{\G} u}{|x'|^{\alpha}} \right\|_{L^p(\Omega)} \left\|\frac{u}{|x'|^{\frac{\beta}{p-1}}} \right\|^{p-1}_{L^p(\Omega)} 
			 -\frac{1}{p} \int_{\partial \Omega} |u|^p\langle\widetilde{\nabla} |x'|^{2-\gamma},dx\rangle,
		\end{equation}
		for $2<\gamma<N$ with $\gamma = \alpha +\beta+1, $ and where $|\cdot|$ is the Euclidean norm on $\mathbb{R}^{N}$.
		In particular, if $u$ vanishes on the boundary $\partial \Omega$, we have
		\begin{equation}\label{123}
		\frac{|N-\gamma |}{p} \left\| \frac{u}{|x'|^{\frac{\gamma}{p}}}\right\|^p_{L^p(\Omega)} \leq \left\| \frac{\nabla_{\G} u}{|x'|^{\alpha}} \right\|_{L^p(\Omega)} \left\|\frac{u}{|x'|^{\frac{\beta}{p-1}}} \right\|^{p-1}_{L^p(\Omega)}.
		\end{equation}
	\end{cor}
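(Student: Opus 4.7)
The plan is to apply Theorem \ref{CKN_thm} with the explicit test weight $V(x) := |x'|^{2-\gamma}$, and then to loosen the resulting fixed gradient exponent into the free parameter $\alpha$ by one additional application of H\"older's inequality inserted inside the proof of that theorem. First I would verify that $V$ satisfies the hypothesis $\mathcal{L}V<0$: since $|x'|$ depends only on the first-stratum coordinates and each vector field in \eqref{Xk0} has the form $\partial_{x'_k}$ plus terms annihilating functions of $x'$ alone, the horizontal calculus on $V$ collapses to the Euclidean radial calculus on $\mathbb{R}^N$. This gives $X_k V = (2-\gamma)|x'|^{-\gamma}x'_k$, hence
$$|\nabla_\G V| = (\gamma-2)|x'|^{1-\gamma}, \qquad \mathcal{L}V = (2-\gamma)(N-\gamma)|x'|^{-\gamma},$$
and the latter is strictly negative on $\Omega\setminus\{x'=0\}$ exactly when $2<\gamma<N$, so $V$ is admissible on this punctured domain.

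Rather than substituting directly into \eqref{CKN_term}, which would rigidly force $\alpha = \gamma/p - 1$ and $\beta = (p-1)\gamma/p$, I would return to the inequality appearing just before H\"older is applied in the proof of Theorem \ref{CKN_thm}, namely
$$\int_\Omega |\mathcal{L}V|\,|u|^p\, dx \leq p\int_\Omega |\nabla_\G V|\,|u|^{p-1}|\nabla_\G u|\, dx - \int_{\partial\Omega}|u|^p\langle\widetilde{\nabla}V,dx\rangle,$$
which after plugging in the explicit values becomes
$$(\gamma-2)(N-\gamma)\int_\Omega \frac{|u|^p}{|x'|^\gamma}\, dx \leq p(\gamma-2)\int_\Omega \frac{|\nabla_\G u|\,|u|^{p-1}}{|x'|^{\gamma-1}}\, dx - \int_{\partial\Omega}|u|^p\langle\widetilde{\nabla}|x'|^{2-\gamma},dx\rangle.$$
I would then use the constraint $\gamma-1 = \alpha+\beta$ to split the middle integrand as $\frac{|\nabla_\G u|}{|x'|^\alpha}\cdot\frac{|u|^{p-1}}{|x'|^\beta}$ and apply H\"older's inequality with conjugate exponents $p$ and $p/(p-1)$, bounding it by $\|\nabla_\G u/|x'|^\alpha\|_{L^p}\|u/|x'|^{\beta/(p-1)}\|_{L^p}^{p-1}$. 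Dividing by $p(\gamma-2)>0$ and observing $|N-\gamma|=N-\gamma$ then delivers \eqref{ex1}; the special case \eqref{123} is immediate since the boundary integral drops out when $u$ vanishes on $\partial\Omega$.

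The only non-routine ingredient is the reduction carried out in the first step: although $\G$ is generally non-Euclidean, a function depending only on the first-stratum coordinate $x'$ sees each $X_k$ as an ordinary partial derivative on $\mathbb{R}^N$, thanks to the triangular form of the vector fields in \eqref{Xk0}. Once this identification is in place the remainder is bookkeeping of constants combined with a single H\"older split, and all hypotheses on $\gamma$ in the statement arise naturally from ensuring that $\mathcal{L}V$ has the correct sign and the integrals involved make sense.
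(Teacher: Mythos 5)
Your proposal is correct and starts from the same point as the paper: both take $V=|x'|^{2-\gamma}$ in Theorem \ref{CKN_thm}, using that a function of the first-stratum variables alone sees each $X_k$ as $\partial_{x'_k}$, so that $\mathcal{L}V=(2-\gamma)(N-\gamma)|x'|^{-\gamma}<0$ for $2<\gamma<N$. Where you diverge is in the last step, and your route is actually the more faithful one to the statement as written. The paper substitutes directly into the finished inequality \eqref{CKN_term}, which rigidly produces the single pair $\alpha=\frac{\gamma-p}{p}$, $\beta=\frac{(p-1)\gamma}{p}$, and then simply \emph{names} these quantities $\alpha$ and $\beta$; strictly speaking this proves \eqref{ex1} only for that one admissible pair, even though the corollary asserts it for all $\alpha,\beta\in\mathbb{R}$ with $\alpha+\beta+1=\gamma$. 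By stepping back to the pre-H\"older stage of the proof of Theorem \ref{CKN_thm} and splitting $|x'|^{-(\gamma-1)}=|x'|^{-\alpha}|x'|^{-\beta}$ before applying H\"older with exponents $p$ and $p/(p-1)$, you obtain the full family of inequalities at no extra cost, which is a genuine (if small) improvement in rigour. One detail to tidy up: after dividing your displayed inequality by $p(\gamma-2)$, the boundary term comes out as $-\frac{1}{p(\gamma-2)}\int_{\partial\Omega}|u|^p\langle\widetilde{\nabla}|x'|^{2-\gamma},dx\rangle$ rather than $-\frac{1}{p}\int_{\partial\Omega}|u|^p\langle\widetilde{\nabla}|x'|^{2-\gamma},dx\rangle$, so the constant in front of the boundary integral in \eqref{ex1} is off by the factor $\gamma-2$; the paper's own derivation exhibits exactly the same discrepancy, and it is immaterial for the main case \eqref{123} where $u$ vanishes on $\partial\Omega$, but if you want your argument to literally deliver \eqref{ex1} you should either carry the factor $\frac{1}{\gamma-2}$ or rewrite the boundary term via $\widetilde{\nabla}|x'|^{2-\gamma}=(2-\gamma)|x'|^{-\gamma}\sum_k x'_kX_k$.
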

	\begin{proof}[Proof of Corollary \ref{cor1}]
		To obtain \eqref{ex1} from \eqref{CKN_term} , we take $V= |x'|^{2-\gamma}$. Then
		$$
			|\nabla_{\G} V| = |2-\gamma||x'|^{1-\gamma},\qquad
			|\mathcal{L} V| =|(2-\gamma)(N-\gamma)| |x'|^{-\gamma},
		$$
		and observe that 
			$\mathcal{L} V = (2-\gamma)(N-\gamma) |x'|^{-\gamma}<0.$
		To use \eqref{CKN_term} we calculate
		\begin{equation*}
			\left\| |\mathcal{L} V|^{\frac{1}{p}} u\right\|_{L^p(\Omega)}^p
			= |(2-\gamma)(N-\gamma)|\left\| \frac{u}{|x'|^{\frac{\gamma}{p}}} \right\|_{L^p(\Omega)}^p,
		\end{equation*}
		\begin{equation*}
			\left\| \frac{|\nabla_{\G} V|}{|\mathcal{L} V|^{\frac{p-1}{p}}} \nabla_{\G} u \right\|_{L^p(\Omega)}  = \frac{|2-\gamma|}{|(2-\gamma)(N-\gamma)|^{\frac{p-1}{p}}} \left\| \frac{|\nabla_{\G} u|}{|x'|^{\frac{\gamma-p}{p}}} \right\|_{L^p(\Omega)},
		\end{equation*}
		\begin{equation*}
			\left\||\mathcal{L}V|^{\frac{1}{p}} u  \right\|_{L^p(\Omega)}^{p-1} = |(2-\gamma)(N-\gamma)|^{\frac{p-1}{p}}\left\| \frac{u}{|x'|^{\frac{\gamma}{p}}}\right\|_{L^p(\Omega)}^{p-1}.
		\end{equation*}
		Thus, \eqref{CKN_term} implies
		$$
			\frac{|N-\gamma|}{p}\left\| \frac{u}{|x'|^{\frac{\gamma}{p}}} \right\|_{L^p(\Omega)}^p \leq \left\| \frac{\nabla_{\G} u}{|x'|^{\frac{\gamma-p}{p}}} \right\|_{L^p(\Omega)}\left\| \frac{u}{|x'|^{\frac{\gamma}{p}}}\right\|_{L^p(\Omega)}^{p-1} 
			- \frac{1}{p}\int_{\partial \Omega} |u|^p\langle\widetilde{\nabla} |x'|^{2-\gamma},dx\rangle.
		$$
		If we denote $\alpha = \frac{\gamma-p}{p}$  and $\frac{\beta}{p-1}=\frac{\gamma}{p}$, we get \eqref{ex1}.
	\end{proof}
	
	\subsubsection{Badiale-Tarantello conjecture} Theorem \ref{CKN_thm} also gives a new proof of the generalised Badiale-Tarantello conjecture \cite{BT} (see, also \cite{RS17a}) on the optimal constant in Hardy inequalities in $\mathbb{R}^n$ with weights taken with respect to a subspace.
	
	\begin{prop}\label{B-T}
		Let $x=(x',x'') \in \mathbb{R}^N \times \mathbb{R}^{n-N}$, $1\leq N \leq n$, $2<\gamma<N$ and $\alpha, \beta \in \mathbb{R}$. Then for any $u \in C_0^{\infty}(\mathbb{R}^n \backslash \{x'=0\} )$ and all $1<p<\infty$, we have
		\begin{equation}\label{BT_eq}
		\frac{|N-\gamma |}{p} \left\| \frac{u}{|x'|^{\frac{\gamma}{p}}}\right\|^p_{L^p(\mathbb{R}^n)} \leq \left\| \frac{\nabla u}{|x'|^{\alpha}} \right\|_{L^p(\mathbb{R}^n)} \left\|\frac{u}{|x'|^{\frac{\beta}{p-1}}} \right\|^{p-1}_{L^p(\mathbb{R}^n)},
		\end{equation}
		where $\gamma = \alpha+\beta+1$ and $|x'|$ is the Euclidean norm $\mathbb{R}^N$. If $\gamma \neq N$ then the constant $	\frac{|N-\gamma |}{p}$ is sharp.
	\end{prop}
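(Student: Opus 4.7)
The plan is to apply Theorem~\ref{CKN_thm} directly on $\mathbb{R}^n$, regarded as the abelian stratified Lie group with the trivial group law (so the horizontal gradient $\nabla_{\G}$ is the full Euclidean gradient $\nabla$ and the sub-Laplacian $\mathcal{L}$ is the full Euclidean Laplacian $\Delta$), with the weight
\begin{equation*}
V(x) = |x'|^{2-\gamma}.
\end{equation*}
The crucial point is that this $V$ depends only on the first $N$ coordinates, so the $N$ that appears in $\mathcal{L} V$ is the \emph{subspace} dimension rather than the ambient dimension $n$; this is what produces the Badiale--Tarantello constant $|N-\gamma|$ in \eqref{BT_eq} in place of the $|n-\gamma|$ a naive application would give.

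A direct computation yields
\begin{equation*}
|\nabla V(x)| = |2-\gamma|\,|x'|^{1-\gamma}, \qquad \mathcal{L} V(x) = \Delta_{x'}|x'|^{2-\gamma} = (2-\gamma)(N-\gamma)\,|x'|^{-\gamma},
\end{equation*}
so the hypothesis $2<\gamma<N$ ensures $\mathcal{L} V<0$ on $\mathbb{R}^n\setminus\{x'=0\}$. For any $u\in C_0^{\infty}(\mathbb{R}^n\setminus\{x'=0\})$, I would choose an admissible domain $\Omega$ with $\mathrm{supp}(u)\subset\Omega\subset\mathbb{R}^n\setminus\{x'=0\}$ (e.g.\ a small tubular neighbourhood of the support). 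On such $\Omega$ the weight $V$ is smooth and $u$ vanishes identically in a neighbourhood of $\partial\Omega$, so the boundary term in \eqref{CKN_term} drops out. Substituting the computed $|\nabla V|$ and $\mathcal{L} V$ into \eqref{CKN_term} and identifying the exponents exactly as in the proof of Corollary~\ref{cor1} (set $\alpha=(\gamma-p)/p$ and $\beta=(p-1)\gamma/p$, whence $\alpha+\beta+1=\gamma$) produces \eqref{BT_eq}.

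For the sharpness claim when $\gamma\neq N$, I would test \eqref{BT_eq} against a family $u_{\epsilon,R}(x)=\phi_R(x)\,|x'|^{-(N-\gamma)/p+\epsilon\,\mathrm{sgn}(N-\gamma)}$ approximating the formal extremizer $|x'|^{-(N-\gamma)/p}$, with $\phi_R$ a smooth cutoff equal to $1$ on $\{2R^{-1}\leq|x'|\leq R/2,\ |x''|\leq R/2\}$ and supported in $\{R^{-1}<|x'|<R,\ |x''|<R\}$. On the bulk where $\phi_R\equiv 1$, the radial $|x'|$-integrals can be computed explicitly, and the ratio of the two sides of \eqref{BT_eq} is forced to tend to $1$ as $\epsilon\to 0^+$ and $R\to\infty$, while the contributions from the annular regions where $\phi_R$ varies are strictly lower order. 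The inequality itself drops out of Theorem~\ref{CKN_thm} in essentially one line, so the main technical obstacle is this sharpness argument: the extremizer is borderline non-integrable in $|x'|$ at both $0$ and $\infty$ and is constant in the $x''$ direction, so the three cutoffs must be relaxed in a coordinated way to keep the leading-order contributions in the numerator and denominator balanced.
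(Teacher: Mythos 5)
Your derivation of the inequality is exactly the paper's own proof: the same choice $V=|x'|^{2-\gamma}$, the same computation of $|\nabla V|$ and $\Delta V$ (with the subspace dimension $N$ appearing because $V$ depends only on $x'$), and the same identification of exponents $\alpha=(\gamma-p)/p$, $\beta=(p-1)\gamma/p$; your extra care in choosing an admissible $\Omega$ containing $\mathrm{supp}(u)$ so the boundary term vanishes is a detail the paper glosses over but is welcome. On sharpness the paper offers no argument at all (it is asserted and implicitly deferred to the cited literature), and your cutoff family around the formal extremizer $|x'|^{-(N-\gamma)/p}$ is the standard route, though as you acknowledge it remains an outline rather than a completed estimate.
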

	The proof of Proposition \ref{B-T} is similar to Corollary \ref{cor1}, so we sketch it only very briefly.
	\begin{proof}[Proof of Proposition \ref{B-T}]
		Let us take $V= |x'|^{2-\gamma}$. We observe that
		$
			\Delta V = (2-\gamma)(N-\gamma) |x'|^{-\gamma}<0,
		$
		as well as 
		$|\nabla V| = |2-\gamma||x'|^{(1-\gamma)}$ and 
$|\Delta V| = |(2-\gamma)(N-\gamma)| |x'|^{-\gamma}$.
Then \eqref{CKN_term} with 
		\begin{equation*}
			\left\| |\Delta V|^{\frac{1}{p}} u\right\|_{L^p(\mathbb{R}^n)}^p
			= |(2-\gamma)(N-\gamma)|\left\| \frac{u}{|x'|^{\frac{\gamma}{p}}} \right\|_{L^p(\mathbb{R}^n)}^p,
		\end{equation*}
		\begin{equation*}
			\left\| \frac{|\nabla V|}{|\Delta V|^{\frac{p-1}{p}}} \nabla u \right\|_{L^p(\mathbb{R}^n)}  = \frac{|2-\gamma|}{|(2-\gamma)(N-\gamma)|^{\frac{p-1}{p}}} \left\| \frac{\nabla u}{|x'|^{\frac{\gamma-p}{p}}} \right\|_{L^p(\mathbb{R}^n)},
		\end{equation*}
		\begin{equation*}
			\left\||\Delta V|^{\frac{1}{p}} u  \right\|_{L^p(\mathbb{R}^n)}^{p-1} = |(2-\gamma)(N-\gamma)|^{\frac{p-1}{p}}\left\| \frac{u}{|x'|^{\frac{\gamma}{p}}}\right\|_{L^p(\mathbb{R}^n)}^{p-1},
		\end{equation*}
		and denoting $\alpha = \frac{\gamma-p}{p}$  and $\frac{\beta}{p-1}=\frac{\gamma}{p}$, implies \eqref{BT_eq}.
	\end{proof}
	In particular, if we take $\beta =(\alpha+1)(p-1)$ and $\gamma=p(\alpha+1)$, then \eqref{BT_eq} implies 
	\begin{equation}\label{3.16}
	\frac{|N-p(\alpha+1) |}{p} \left\| \frac{u}{|x'|^{\alpha+1}}\right\|_{L^p(\mathbb{R}^n)} \leq \left\| \frac{\nabla u}{|x'|^{\alpha}}  \right\|_{L^p(\mathbb{R}^n)},
	\end{equation}
	where $1<p< \infty$, for all $u \in C_0^{\infty}(\mathbb{R}^n \backslash \{x'=0\})$, $\alpha \in \mathbb{R}$, with sharp constant. When $\alpha =0$, $1<p<N$ and $2\leq N\leq n$, the inequality \eqref{3.16} implies that
	\begin{equation}
	\left\| \frac{u}{|x'|}  \right\|_{L^p(\mathbb{R}^n)} \leq \frac{p}{N-p} \left\| \nabla u \right\|_{L^p(\mathbb{R}^n)},
	\end{equation}
which given another proof of the Badiale-Tarantello conjecture from \cite[Remark 2.3]{BT}.
	
	
	\subsubsection{The local Hardy type inequality on $\G$.}  As another consequence of Theorem \ref{CKN_thm} we obtain the local Hardy type inequality with the boundary term, with $d$ being the $\mathcal{L}$-gauge as in \eqref{EQ:Fuso}. 
	\begin{cor}\label{fund_ineq}
		Let $\Omega \subset \G$ with $0 \notin \partial \Omega$ be an admissible domain in a stratified group $\G$ of homogeneous dimension $Q\geq 3.$ Let $0>\alpha > 2-Q$. Let $u \in C^{1}(\Omega \backslash \{0\})\cap C(\overline{\Omega}\backslash \{0\})$. Then we have
		\begin{multline}\label{COR123}
			\frac{|Q+\alpha-2|}{p} \left\|d^{\frac{\alpha-2}{p}} |\nabla_{\G} d|^{\frac{2}{p}} u \right\|_{L^p(\Omega)} \leq \left\| d^{\frac{p+\alpha-2}{p}} |\nabla_{\G}d|^{\frac{2-p}{p}} |\nabla_{\G}u| \right\|_{L^p(\Omega)} \\
			- \frac{1}{p}\left\| d^{\frac{\alpha-2}{p}} |\nabla_{\G} d|^{\frac{2}{p}} u\right\|^{1-p}_{L^p(\Omega)} \int_{\partial \Omega} d^{\alpha-1} |u|^p \langle\widetilde{\nabla} d,dx\rangle. 
		\end{multline}
	\end{cor}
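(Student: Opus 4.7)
The plan is to derive Corollary \ref{fund_ineq} from Theorem \ref{CKN_thm} by choosing the weight $V = d^{\alpha}$, where $d$ is the $\mathcal{L}$-gauge from \eqref{EQ:Fuso}, in close parallel with the proof of Corollary \ref{cor1}.

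First I would verify the sign hypothesis $\mathcal{L} V < 0$. Since $d^{2-Q}$ is (a constant multiple of) the fundamental solution of $\mathcal{L}$ on $\mathbb{G} \setminus \{0\}$, the chain-rule identity $\mathcal{L}(f(d)) = f''(d)\,|\nabla_{\mathbb{G}} d|^{2} + f'(d)\,\mathcal{L} d$ applied with $f(t) = t^{2-Q}$ forces the identity $\mathcal{L} d = (Q-1)\,d^{-1}\,|\nabla_{\mathbb{G}} d|^{2}$ on $\mathbb{G}\setminus\{0\}$; feeding this back in with $f(t) = t^{\alpha}$ yields
\begin{equation*}
\mathcal{L}(d^{\alpha}) = \alpha(\alpha + Q - 2)\, d^{\alpha-2}\, |\nabla_{\mathbb{G}} d|^{2},
\end{equation*}
which is strictly negative on $\Omega\setminus\{0\}$ precisely because the constraint $2-Q < \alpha < 0$ forces the factors $\alpha$ and $\alpha + Q - 2$ to have opposite signs. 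Alongside this we record $|\nabla_{\mathbb{G}} V| = |\alpha|\, d^{\alpha-1}\, |\nabla_{\mathbb{G}} d|$ and $\widetilde{\nabla} V = \alpha\, d^{\alpha-1}\, \widetilde{\nabla} d$.

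Next I would unpack the three quantities entering \eqref{CKN_term}: a direct computation gives
\begin{equation*}
|\mathcal{L} V|^{1/p} = \bigl(|\alpha|(Q+\alpha-2)\bigr)^{1/p}\, d^{(\alpha-2)/p}\, |\nabla_{\mathbb{G}} d|^{2/p},
\end{equation*}
\begin{equation*}
\frac{|\nabla_{\mathbb{G}} V|}{|\mathcal{L} V|^{(p-1)/p}} = \frac{|\alpha|}{\bigl(|\alpha|(Q+\alpha-2)\bigr)^{(p-1)/p}}\, d^{(\alpha+p-2)/p}\, |\nabla_{\mathbb{G}} d|^{(2-p)/p}.
\end{equation*}
Substituting these into \eqref{CKN_term} and then dividing through by the scalar factor $p|\alpha|$ together with the surviving power $\bigl\||\mathcal{L}V|^{1/p} u\bigr\|_{L^{p}(\Omega)}^{p-1}$ collapses the resulting inequality to exactly \eqref{COR123}, mirroring the passage from \eqref{CKN_term} to \eqref{ex1} in Corollary \ref{cor1}.

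The main technical obstacle I anticipate is the singularity of $V = d^{\alpha}$ at the origin, since the hypothesis only forbids $0 \in \partial \Omega$ and thus allows $0 \in \Omega$. In that case Theorem \ref{CKN_thm} cannot be invoked directly, so I would apply it instead on the truncated admissible domain $\Omega \setminus \overline{B_{\varepsilon}(0)}$ with $B_{\varepsilon}(0) = \{d < \varepsilon\}$ and let $\varepsilon \to 0$. The extra boundary integral generated on the inner sphere $\partial B_{\varepsilon}(0)$ must be shown to vanish in the limit, and this will follow from the homogeneity of the $\mathcal{L}$-gauge combined with the strict inequality $\alpha > 2 - Q$, which is precisely the dimensional condition needed for integrability of the weight near the origin.
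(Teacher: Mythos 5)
Your proposal follows essentially the same route as the paper: the paper also takes $V=d^{\alpha}$, computes $\mathcal{L}d^{\alpha}=\alpha(\alpha+Q-2)d^{\alpha-2}|\nabla_{\G}d|^{2}$ from the fundamental-solution property of $\varepsilon=d^{2-Q}$, and substitutes into \eqref{CKN_term} after normalising by $\bigl\||\mathcal{L}V|^{1/p}u\bigr\|_{L^p(\Omega)}^{p-1}$. Your additional truncation argument near the origin is a point of extra care that the paper handles only implicitly (via its remark that the Green formulae remain valid for functions with the singularity behaviour of the fundamental solution), but it does not change the substance of the argument.
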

	This extends the local Hardy type inequality that was obtained in \cite{RS17b}  for $p=2$:
	\begin{multline}
		\frac{|Q+\alpha-2|}{2} \left\|d^{\frac{\alpha-2}{2}} |\nabla_{\G} d| u \right\|_{L^2(\Omega)} \leq \left\| d^{\frac{\alpha}{2}}  |\nabla_{\G}u| \right\|_{L^2(\Omega)} \\
		-\frac{1}{2} \left\| d^{\frac{\alpha-2}{2}} |\nabla_{\G} d| u\right\|_{L^2(\Omega)}^{-1} \int_{\partial \Omega} d^{\alpha-1} |u|^2 \langle\widetilde{\nabla} d,dx\rangle .
	\end{multline}
	
	\begin{proof}[Proof of Corollary \ref{fund_ineq}]
		First, we can multiply both sides of the inequality \eqref{CKN_term} by $\left\| |\mathcal{L}V|^{\frac{1}{p}} u \right\|_{L^p(\Omega)}^{1-p}$, so that we have
		\begin{multline}\label{Hardy_term}
			\left\| |\mathcal{L}V|^{\frac{1}{p}}u \right\|_{L^p(\Omega)} \leq p \left\| \frac{|\nabla_{\G}V|}{|\mathcal{L}V|^{\frac{p-1}{p}}}|\nabla_{\G} u|\right\|_{L^p(\Omega)}
			- \left\||\mathcal{L} V|^{\frac{1}{p}} u\right\|_{L^p(\Omega)}^{1-p} \int_{\partial \Omega}|u|^p \langle\widetilde{\nabla}V,dx\rangle.
		\end{multline}
		Now, let us take $V=d^{\alpha}$. We have
		\begin{multline*}
			\mathcal{L} d^{\alpha} = \nabla_{\G}(\nabla_{\G} \varepsilon^{\frac{\alpha}{2-Q}}) = \nabla_{\G}\left(\frac{\alpha}{2-Q}\varepsilon^{\frac{\alpha+Q-2}{2-Q}}\nabla_{\G}\varepsilon \right) \\
			 = \frac{\alpha(\alpha+Q-2)}{(2-Q)^2} \varepsilon^{\frac{\alpha-4+2Q}{2-Q}}|\nabla_{\G} \varepsilon|^2 + \frac{\alpha}{2-Q}\varepsilon^{\frac{\alpha+Q-2}{2-Q}} \mathcal{L} \varepsilon.
		\end{multline*}
		Since $\varepsilon$ is the fundamental solution of $\mathcal{L}$, we have
		\begin{equation*}
			\mathcal{L} d^{\alpha} = \frac{\alpha(\alpha+Q-2)}{(2-Q)^2} \varepsilon^{\frac{\alpha-4+2Q}{2-Q}}|\nabla_{\G} \varepsilon|^2= \alpha(\alpha+Q-2)d^{\alpha-2}|\nabla_{\G}d|^2.
		\end{equation*}
		We can observe that $\mathcal{L}d^{\alpha} <0$, and also the identities 		\begin{equation*}
			\left\||\mathcal{L}d^{\alpha}|^{\frac{1}{p}} u \right\|_{L^p(\Omega)} = \alpha^{\frac{1}{p}} |Q+\alpha-2|^{\frac{1}{p}} \left\| d^{\frac{\alpha-2}{p}} |\nabla_{\G}d|^{\frac{2}{p}} u \right\|_{L^p(\Omega)},
		\end{equation*}
		\begin{equation*}
			\left\| \frac{|\nabla_{\G}d^{\alpha}|}{|\mathcal{L}d^{\alpha}|^{\frac{p-1}{p}}} |\nabla_{\G} u| \right\|_{L^p(\Omega)} = \alpha^{\frac{1}{p}}|Q+\alpha-2|^{\frac{1-p}{p}} \left\| d^{\frac{\alpha-2+p}{p}} |\nabla_{\G}d|^{\frac{2-p}{p}} |\nabla_{\G}u| \right\|_{L^p(\Omega)},
		\end{equation*}
		\begin{align*}
			\left\| |\mathcal{L}d^{\alpha}|^{\frac{1}{p}} u \right\|^{1-p}_{L^p(\Omega)} \int_{\partial \Omega} |u|^p\langle\widetilde{\nabla} d^{\alpha},dx\rangle =  \alpha^{\frac{1}{p}} |Q+\alpha-2|^{\frac{1-p}{p}} \left\| d^{\frac{\alpha-2}{p}} |\nabla_{\G}d|^{\frac{2}{p}} u \right\|^{1-p}_{L^p(\Omega)}\\
			\int_{\partial \Omega} d^{\alpha-1}|u|^p\langle\widetilde{\nabla} d,dx\rangle.
		\end{align*}
		Using \eqref{Hardy_term} we arrive at
		\begin{align*}
			\frac{|Q+\alpha-2|}{p} \left\|d^{\frac{\alpha-2}{p}} |\nabla_{\G} d|^{\frac{2}{p}} u \right\|_{L^p(\Omega)} \leq \left\| d^{\frac{p+\alpha-2}{p}} |\nabla_{\G}d|^{\frac{2-p}{p}} |\nabla_{\G}u| \right\|_{L^p{\Omega}} \\
			-\frac{1}{p} \left\| d^{\frac{\alpha-2}{p}} |\nabla_{\G} d|^{\frac{2}{p}} u\right\|^{1-p}_{L^p(\Omega)} \int_{\partial \Omega} d^{\alpha-1} |u|^p \langle\widetilde{\nabla} d,dx\rangle ,\nonumber
		\end{align*}
	which implies \eqref{COR123}.
	\end{proof}
	
	
	\subsection{Uncertainty type principles}
	The inequality \eqref{Hardy_term} implies the following Heisen\-berg-Pauli-Weyl type uncertainty principle on stratified groups.
	\begin{cor}\label{UP}
		Let $\Omega \subset \G$ be admissible domain in a stratified group $\G$ and let $V \in C^2(\Omega)$ be real-valued. Then for any complex-valued function $u \in C^2(\Omega)\cap C^1(\overline{\Omega})$ we have
		\begin{multline}\label{uncert_term}
			\left\| |\mathcal{L}V|^{-\frac{1}{p}} u \right\|_{L^p(\Omega)}\left\| \frac{|\nabla_{\G}V|}{|\mathcal{L}V|^{\frac{p-1}{p}}} |\nabla_{\G} u| \right\|_{L^p(\Omega)} \\
			\geq \frac{1}{p} \left\| u \right\|^2_{L^p(\Omega)} 
			+ \frac{1}{p}\left\| |\mathcal{L}V|^{-\frac{1}{p}} u \right\|_{L^p(\Omega)}\left\| |\mathcal{L}V|^{\frac{1}{p}} u \right\|^{1-p}_{L^p(\Omega)}\int_{\partial \Omega} |u|^p \langle\widetilde{\nabla} V,dx\rangle.
		\end{multline}
		In particular, if $u$ vanishes on the boundary $\partial \Omega$, then we have
		\begin{equation}\label{uncert}
		\left\| |\mathcal{L}V|^{-\frac{1}{p}} u \right\|_{L^p(\Omega)}\left\| \frac{|\nabla_{\G}V|}{|\mathcal{L}V|^{\frac{p-1}{p}}} |\nabla_{\G} u| \right\|_{L^p(\Omega)} \geq \frac{1}{p} \left\| u \right\|^2_{L^p(\Omega)}.
		\end{equation}
	\end{cor}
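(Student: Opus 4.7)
My plan is to derive \eqref{uncert_term} from the generalised weighted Hardy inequality \eqref{Hardy_term} (which implicitly requires $\mathcal{L}V<0$, as in Theorem~\ref{CKN_thm}) by coupling it with a \emph{weighted Cauchy--Schwarz} estimate for $\|u\|_{L^p(\Omega)}^p$. The first step is to multiply both sides of \eqref{Hardy_term} by $\bigl\|\,|\mathcal{L}V|^{-1/p}u\bigr\|_{L^p(\Omega)}$ and divide by $p$, producing on the left the symmetric product $\tfrac{1}{p}\bigl\|\,|\mathcal{L}V|^{1/p}u\bigr\|_{L^p}\bigl\|\,|\mathcal{L}V|^{-1/p}u\bigr\|_{L^p}$ and on the right the gradient--potential product $\bigl\|\tfrac{|\nabla_{\G}V|}{|\mathcal{L}V|^{(p-1)/p}}|\nabla_{\G}u|\bigr\|_{L^p}\bigl\|\,|\mathcal{L}V|^{-1/p}u\bigr\|_{L^p}$ minus a boundary remainder that already matches, up to sign, the one appearing in \eqref{uncert_term}.

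It then remains only to show that $\tfrac{1}{p}\|u\|_{L^p(\Omega)}^2$ is bounded above by the symmetric product on the left. For this, the key observation is the pointwise factorisation
\[
|u|^p = \bigl(|\mathcal{L}V|^{1/p}|u|\bigr)^{p/2}\cdot\bigl(|\mathcal{L}V|^{-1/p}|u|\bigr)^{p/2},
\]
in which the weight cancels and the two exponents sum to $p$. Integrating over $\Omega$ and applying the ordinary Cauchy--Schwarz inequality in $L^2(\Omega,dx)$ to the functions $(|\mathcal{L}V|^{1/p}|u|)^{p/2}$ and $(|\mathcal{L}V|^{-1/p}|u|)^{p/2}$ gives
\[
\int_\Omega |u|^p\,dx \le \Bigl(\int_\Omega |\mathcal{L}V|\,|u|^p\,dx\Bigr)^{1/2}\Bigl(\int_\Omega |\mathcal{L}V|^{-1}|u|^p\,dx\Bigr)^{1/2},
\]
and taking the $2/p$-th power yields the weighted Cauchy--Schwarz bound
\[
\|u\|_{L^p(\Omega)}^2 \le \bigl\|\,|\mathcal{L}V|^{1/p}u\bigr\|_{L^p(\Omega)}\bigl\|\,|\mathcal{L}V|^{-1/p}u\bigr\|_{L^p(\Omega)}.
\]

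Substituting this bound into the rearranged form of \eqref{Hardy_term} from the first step, and relocating the boundary contribution to the right-hand side, produces precisely \eqref{uncert_term}. The boundary-free inequality \eqref{uncert} then follows at once, since when $u$ vanishes on $\partial\Omega$ the boundary integral drops out. There is no genuine obstacle in the argument: the one creative ingredient is spotting the symmetric $p/2$--$p/2$ splitting of $|u|^p$ that makes the weights cancel, after which the proof reduces to an algebraic rearrangement of the already-established \eqref{Hardy_term}.
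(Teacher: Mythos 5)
Your argument is correct and follows essentially the same route as the paper: both rearrange \eqref{Hardy_term} by multiplying through by $\bigl\|\,|\mathcal{L}V|^{-1/p}u\bigr\|_{L^p(\Omega)}$ and then bound $\|u\|_{L^p(\Omega)}^2$ by the product $\bigl\|\,|\mathcal{L}V|^{1/p}u\bigr\|_{L^p(\Omega)}\bigl\|\,|\mathcal{L}V|^{-1/p}u\bigr\|_{L^p(\Omega)}$ --- the paper invokes the extended H\"older inequality for this last step, and your $p/2$--$p/2$ splitting followed by Cauchy--Schwarz is exactly a proof of that instance of it. Your parenthetical remark that the statement implicitly requires $\mathcal{L}V<0$ (inherited from Theorem~\ref{CKN_thm}) is also accurate.
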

	\begin{proof}[Proof of Corollary \ref{UP}]
	By using the extended H\"older inequality and \eqref{Hardy_term} we have
		\begin{align*}
			&\left\| |\mathcal{L}V|^{-\frac{1}{p}} u \right\|_{L^p(\Omega)}\left\| \frac{|\nabla_{\G}V|}{|\mathcal{L}V|^{\frac{p-1}{p}}} |\nabla_{\G} u| \right\|_{L^p(\Omega)} \\
			&\geq \frac{1}{p} \left\| |\mathcal{L}V|^{-\frac{1}{p}} u \right\|_{L^p(\Omega)} \left\| |\mathcal{L}V|^{\frac{1}{p}} u \right\|_{L^p(\Omega)} 
			+ \frac{1}{p} \left\| |\mathcal{L}V|^{-\frac{1}{p}} u \right\|_{L^p(\Omega)}\left\| |\mathcal{L}V|^{\frac{1}{p}} u \right\|^{1-p}_{L^p(\Omega)}\int_{\partial \Omega} |u|^p \langle\widetilde{\nabla} V,dx\rangle,\nonumber\\
			&\geq \frac{1}{p} \left\|  |u|^2 \right\|_{L^{\frac{p}{2}}(\Omega)} \nonumber  
			+ \frac{1}{p}\left\| |\mathcal{L}V|^{-\frac{1}{p}} u \right\|_{L^p(\Omega)}\left\| |\mathcal{L}V|^{\frac{1}{p}} u \right\|^{1-p}_{L^p(\Omega)}\int_{\partial \Omega} |u|^p \langle\widetilde{\nabla} V,dx\rangle.\nonumber\\
			&= \frac{1}{p} \left\|  u \right\|^2_{L^p(\Omega)} \nonumber  
			+ \frac{1}{p}\left\| |\mathcal{L}V|^{-\frac{1}{p}} u \right\|_{L^p(\Omega)}\left\| |\mathcal{L}V|^{\frac{1}{p}} u \right\|^{1-p}_{L^p(\Omega)}\int_{\partial \Omega} |u|^p \langle\widetilde{\nabla} V,dx\rangle,\nonumber
		\end{align*} 
	proving \eqref{uncert_term}.
	\end{proof}
	\medskip
	By setting $V =|x'|^{\alpha}$ in the inequality \eqref{uncert},
	we recover the Heisenberg-Pauli-Weyl type uncertainty principle on stratified groups as in  \cite{Ozawa} and \cite{Ruzhansky-Suragan:squares}:
	
	\begin{align*}\label{RS_uncert}
		\left( \int_{\Omega} |x'|^{2-\alpha} |u|^p dx \right) \left( \int_{\Omega} |x'|^{\alpha+p-2} |\nabla_{\G} u|^p dx \right) \geq \left(\frac{N+\alpha-2}{p} \right)^p \left( \int_{\Omega} |u|^p dx \right)^2.
	\end{align*}
	
	In the abelian case $\G=(\mathbb{R}^n,+)$, taking $N=n\geq 3$, for $\alpha=0$ and $p=2$ this implies the classical Heisenberg-Pauli-Weyl uncertainty principle for all $u \in C^{\infty}_0(\mathbb{R}^n \backslash \{0\})$:
	\begin{equation*}
		 \left(\int_{\mathbb{R}^n} |x|^2 |u(x)|^2 dx\right)
		 \left( \int_{\mathbb{R}^n} |\nabla u(x)|^2 dx \right) 
		 \geq \left(\frac{n-2}{2} \right)^2 \left(\int_{\mathbb{R}^n} |u(x)|^2 dx \right)^2.
	\end{equation*}
	
	By setting $V =d^{\alpha}$ in the inequality \eqref{uncert}, we obtain another uncertainty type principle:
	\begin{align*}
		\left(\int_{\Omega} \frac{|u|^p}{d^{\alpha-2}|\nabla_{\G}d|^2} dx \right) \left(\int_{\Omega} d^{\alpha+p-2}|\nabla_{\G}d|^{2-p}|\nabla_{\G}u|^p dx\right) \geq \left(\frac{Q+\alpha-2}{p} \right)^p \left( \int_{\Omega} |u|^p dx\right)^2;
	\end{align*}
	taking $p=2$ and $\alpha=0$ this yields
	\begin{equation*}
		\left(\int_{\Omega} \frac{d^{2}}{|\nabla_{\G}d|^2}|u|^2 dx \right) \left(\int_{\Omega} |\nabla_{\G}u|^2 dx\right) \geq \left(\frac{Q-2}{2} \right)^2 \left( \int_{\Omega} |u|^2 dx\right)^2.
	\end{equation*}  
	
	
	\section{Weighted $L^p$-Rellich type inequalities}
	\label{SEC:BSS3}
	
	In this section we establish weighted Rellich inequalities with boundary terms. We consider first the $L^2$ and then the $L^p$ cases. 	The analogous $L^2$-Rellich inequality on $\mathbb{R}^n$ was proved by Schmincke \cite{Shim} (and generalised by Bennett \cite{Bennett}).

	
	\begin{thm}\label{rellich}
		Let $\Omega$ be an admissible domain in a stratified group $\G$ with $N \geq 2$ being the dimension of the first stratum. If a real-valued function $V\in C^2(\Omega)$ satisfies $\mathcal{L} V(x) < 0$ for all $x \in \Omega$, then for  every $\epsilon >0$ we have
		\begin{align}\label{Rel_eq}
			\left\| \frac{|V|}{|\mathcal{L} V|^{\frac{1}{2}}} \mathcal{L} u \right\|^2_{L^2(\Omega)} &\geq 2\epsilon \left\| V^{\frac{1}{2}} |\nabla_{\G} u| \right\|^2_{L^2(\Omega)} + \epsilon(1-\epsilon)
			\left\| |\mathcal{L}V|^{\frac{1}{2}} u\right\|^2_{L^2(\Omega)}\nonumber\\
			&- \epsilon \int_{\partial \Omega} (|u|^2\langle \widetilde{\nabla}V,dx\rangle - V\langle\widetilde{\nabla} |u|^2,dx\rangle), 	
		\end{align}
		for all complex-valued functions $u \in C^2(\Omega)\cap C^1(\overline{\Omega})$. In particular, if $u$ vanishes on the boundary $\partial \Omega$,  we have
		\begin{equation}
		\left\| \frac{|V|}{|\mathcal{L} V|^{\frac{1}{2}}} \mathcal{L} u \right\|^2_{L^2(\Omega)} \geq 2\epsilon \left\| V^{\frac{1}{2}} |\nabla_{\G} u| \right\|^2_{L^2(\Omega)} + \epsilon(1-\epsilon)
		\left\| |\mathcal{L}V|^{\frac{1}{2}} u\right\|^2_{L^2(\Omega)}\nonumber.
		\end{equation}
		
	\end{thm}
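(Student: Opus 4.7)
The plan is to derive the inequality from a single integral identity produced by Green's second formula \eqref{g2}, combined at the end with a weighted Young inequality. The test function in the identity will be the pair $(V,|u|^2)$, so that the Rellich quantity $\int V\bar u\,\mathcal{L}u$ naturally appears once $\mathcal{L}|u|^2$ is expanded by the Leibniz rule.

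The first step is to substitute $u\mapsto|u|^2$ and $v\mapsto V$ in \eqref{g2} to obtain
\begin{equation*}
\int_{\Omega}\bigl(|u|^{2}\mathcal{L}V - V\,\mathcal{L}|u|^{2}\bigr)\,dx
= \int_{\partial\Omega}\bigl(|u|^{2}\langle\widetilde{\nabla}V,dx\rangle - V\langle\widetilde{\nabla}|u|^{2},dx\rangle\bigr).
\end{equation*}
Because the horizontal fields $X_{k}$ are real, the stratified Leibniz rule applied to $|u|^{2}=u\bar u$ gives
\begin{equation*}
\mathcal{L}|u|^{2} = 2\,\mathrm{Re}(\bar u\,\mathcal{L}u) + 2|\nabla_{\G}u|^{2}.
\end{equation*}
Inserting this and using the hypothesis $\mathcal{L}V=-|\mathcal{L}V|$, one rearranges the identity into
\begin{equation*}
-2\,\mathrm{Re}\!\int_{\Omega}V\bar u\,\mathcal{L}u\,dx
= 2\!\int_{\Omega}V|\nabla_{\G}u|^{2}\,dx + \!\int_{\Omega}|\mathcal{L}V|\,|u|^{2}\,dx + \!\int_{\partial\Omega}\bigl(|u|^{2}\langle\widetilde{\nabla}V,dx\rangle - V\langle\widetilde{\nabla}|u|^{2},dx\rangle\bigr).
\end{equation*}

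The second step is to bound the left-hand side from above. I would factor the integrand as
\begin{equation*}
|V\bar u\,\mathcal{L}u| = \Bigl(\tfrac{|V|}{|\mathcal{L}V|^{1/2}}|\mathcal{L}u|\Bigr)\bigl(|\mathcal{L}V|^{1/2}|u|\bigr)
\end{equation*}
and apply the weighted Young inequality $2|ab|\leq \epsilon^{-1}a^{2}+\epsilon b^{2}$ with the free parameter $\epsilon>0$, yielding
\begin{equation*}
-2\,\mathrm{Re}\!\int_{\Omega}V\bar u\,\mathcal{L}u\,dx
\leq \frac{1}{\epsilon}\left\|\tfrac{|V|}{|\mathcal{L}V|^{1/2}}\mathcal{L}u\right\|_{L^{2}(\Omega)}^{2}
+ \epsilon\bigl\||\mathcal{L}V|^{1/2}u\bigr\|_{L^{2}(\Omega)}^{2}.
\end{equation*}
Substituting into the identity, absorbing $\epsilon\,\||\mathcal{L}V|^{1/2}u\|_{L^{2}}^{2}$ into the corresponding term already present, and finally multiplying through by $\epsilon$, I would read off the stated inequality \eqref{Rel_eq}. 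The case of $u$ vanishing on $\partial\Omega$ is then immediate from the general form.

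The main obstacle is purely bookkeeping: keeping the signs in the boundary integrals consistent through the application of \eqref{g2} and through the substitution $\mathcal{L}V=-|\mathcal{L}V|$, and checking the complex form of the Leibniz identity for $\mathcal{L}|u|^{2}$ with the definition $|\nabla_{\G}u|^{2}=\sum_{k}|X_{k}u|^{2}$. The analytic inputs (Green's second formula, the Leibniz rule, and Young's inequality) are entirely standard once the identity is set up correctly; no regularization in $\epsilon$ of the type used for Theorem \ref{CKN_thm} is needed here, because the hypothesis $u\in C^{2}(\Omega)\cap C^{1}(\overline\Omega)$ already makes $|u|^{2}$ admissible in \eqref{g2}.
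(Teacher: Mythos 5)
Your proposal follows essentially the same route as the paper's proof: Green's second formula applied to the pair $(V,|u|^{2})$, the Leibniz expansion $\mathcal{L}|u|^{2}=2\,\mathrm{Re}(\bar u\,\mathcal{L}u)+2|\nabla_{\G}u|^{2}$, and a weighted Cauchy--Schwarz/Young step with the parameter $\epsilon$, so it is correct and matches the paper (the paper phrases the last step as Cauchy--Schwarz followed by $2AB\leq A^{2}+B^{2}$, which is the same as your pointwise Young inequality). One bookkeeping remark: carried through consistently, both your argument and the paper's own computation produce the boundary term with a $+\epsilon$ sign, so the $-\epsilon$ in front of the boundary integral in \eqref{Rel_eq} appears to be a sign slip in the statement rather than a defect of your proof.
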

	\begin{proof}[Proof of Theorem \ref{rellich}]
		Using Green's second identity \eqref{g2} and that $\mathcal{L} V(x) <0$ in $\Omega$, we obtain
		\begin{align*}
			\int_{\Omega} |\mathcal{L} V||u|^2dx &= -\int_{\Omega}V\mathcal{L}|u|^2 dx -  \int_{\partial \Omega} (|u|^2\langle \widetilde{\nabla}V,dx\rangle - V\langle\widetilde{\nabla} |u|^2,dx\rangle) \\ 
			&=-2\int_{\Omega}V\left({\rm Re}(\overline{u}\mathcal{L} u)+|\nabla_{\G} u|^2\right)dx - \int_{\partial \Omega} (|u|^2\langle \widetilde{\nabla}V,dx\rangle - V\langle\widetilde{\nabla} |u|^2,dx\rangle).
		\end{align*}
		Using the Cauchy-Schwartz inequality we get
		\begin{align*}
			\int_{\Omega} |\mathcal{L} V||u|^2 dx &\leq 2\left(\frac{1}{\epsilon} \int_{\Omega} \frac{|V|^2}{|\mathcal{L} V|}|\mathcal{L} u|^2 dx\right)^{\frac{1}{2}} \left(\epsilon \int_{\Omega}|\mathcal{L} V||u|^2 dx\right)^{\frac{1}{2}}\\
			&-2 \int_{\Omega}V|\nabla_{\G} u|^2 dx - \int_{\partial \Omega} (|u|^2\langle \widetilde{\nabla}V,dx\rangle - V\langle\widetilde{\nabla} |u|^2,dx\rangle) \\
			&\leq \frac{1}{\epsilon} \int_{\Omega} \frac{|V|^2}{|\mathcal{L} V|} |\mathcal{L} u|^2 dx + \epsilon \int_{\Omega}|\mathcal{L} V||u|^2dx \\
			& - 2\int_{\Omega} V |\nabla_{\G} u|^2 dx- \int_{\partial \Omega} (|u|^2\langle \widetilde{\nabla}V,dx\rangle - V\langle\widetilde{\nabla} |u|^2,dx\rangle),
		\end{align*}
yielding \eqref{Rel_eq}.
	\end{proof}
	
	\begin{cor}\label{cor3.2}
		Let $ \G$ be a  stratified group with $N$ being the dimension of the first stratum. If $\alpha>-2$ and $N>\alpha+4$ then for all $u \in C_0^{\infty}(\G \backslash \{x'=0\})$ we have
		\begin{equation}\label{6.2.3}
		\int_{\G \backslash \{x'=0\}} \frac{|\mathcal{L} u|^2}{|x'|^{\alpha}} dx \geq \frac{(N+\alpha)^2(N-\alpha-4)^2}{16}	\int_{\G \backslash \{x'=0\}} \frac{|u|^2}{|x'|^{\alpha+4}}dx.
		\end{equation}
			\end{cor}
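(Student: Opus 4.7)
The plan is to apply Theorem~\ref{rellich} with the weight $V=|x'|^{-\alpha-2}$, use a weighted $L^2$-Hardy inequality to absorb the first-order term, and then optimize in the free parameter $\epsilon>0$. Because $u$ has compact support disjoint from $\{x'=0\}$, I may fix a bounded admissible $\Omega$ with $\mathrm{supp}(u)\subset\Omega$ and $\overline{\Omega}\cap\{x'=0\}=\emptyset$; on such an $\Omega$ the weight $V$ is smooth and all boundary integrals in \eqref{Rel_eq} vanish identically.

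Since $|x'|$ depends only on first-stratum variables, $X_k|x'|=x'_k/|x'|$ and a short calculation gives $\mathcal{L}|x'|^{s}=s(s+N-2)|x'|^{s-2}$ for every $s\in\mathbb{R}$. Taking $s=-\alpha-2$ yields
\begin{equation*}
\mathcal{L}V=-(\alpha+2)(N-\alpha-4)|x'|^{-\alpha-4},
\end{equation*}
which is strictly negative under the hypotheses $\alpha>-2$ and $N>\alpha+4$. Evaluating the three weighted $L^2$-norms appearing in \eqref{Rel_eq} then reduces the theorem to
\begin{equation*}
\frac{1}{(\alpha+2)(N-\alpha-4)}\int\frac{|\mathcal{L}u|^2}{|x'|^{\alpha}}\,dx
\;\geq\;
2\epsilon\int\frac{|\nabla_{\G}u|^2}{|x'|^{\alpha+2}}\,dx
+\epsilon(1-\epsilon)(\alpha+2)(N-\alpha-4)\int\frac{|u|^2}{|x'|^{\alpha+4}}\,dx.
\end{equation*}

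Next, I would apply \eqref{Hardy} with the same choice $V=|x'|^{-\alpha-2}$ and $p=2$ (equivalently, a suitable specialization of Corollary~\ref{cor1}); after simplification this yields the weighted $L^2$-Hardy inequality
\begin{equation*}
\int\frac{|\nabla_{\G}u|^{2}}{|x'|^{\alpha+2}}\,dx\;\geq\;\frac{(N-\alpha-4)^{2}}{4}\int\frac{|u|^{2}}{|x'|^{\alpha+4}}\,dx,
\end{equation*}
which is valid because $N>\alpha+4$. Substituting this lower bound into the previous display and multiplying through by the positive factor $(\alpha+2)(N-\alpha-4)$ reduces \eqref{6.2.3} to maximizing, over $\epsilon>0$, the one-variable quadratic
\begin{equation*}
f(\epsilon):=\epsilon\left[\tfrac{N-\alpha-4}{2}+(1-\epsilon)(\alpha+2)\right].
\end{equation*}

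Finally, elementary calculus places the maximizer at $\epsilon_{\ast}=(N+\alpha)/[4(\alpha+2)]$, which is positive because $\alpha+2>0$ and $N+\alpha>N-2>0$, and yields the maximum value $f(\epsilon_{\ast})=(N+\alpha)^{2}/[16(\alpha+2)]$. Substituting $\epsilon_{\ast}$ cancels the $(\alpha+2)$ factor and leaves precisely the constant $(N+\alpha)^{2}(N-\alpha-4)^{2}/16$ demanded by \eqref{6.2.3}. I do not anticipate a genuine obstacle here; the only delicate point is the bookkeeping of exponents so that the Hardy improvement (the $2\epsilon$-term) and the original Rellich contribution (the $\epsilon(1-\epsilon)$-term) combine in the ratio needed for the optimization in $\epsilon$ to produce the claimed factor $(N+\alpha)^{2}$ rather than the weaker $(2(\alpha+2))^{2}$ that one would get from Theorem~\ref{rellich} alone with $\epsilon=1/2$.
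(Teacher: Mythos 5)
Your proposal is correct and follows essentially the same route as the paper: apply Theorem~\ref{rellich} with $V=|x'|^{-(\alpha+2)}$ (boundary terms vanishing since $\mathrm{supp}\,u$ avoids $\{x'=0\}$), insert the weighted $L^2$-Hardy bound $\int|x'|^{-\alpha-2}|\nabla_{\G}u|^2\,dx\geq\tfrac{(N-\alpha-4)^2}{4}\int|x'|^{-\alpha-4}|u|^2\,dx$, and optimize at $\epsilon_{\ast}=(N+\alpha)/[4(\alpha+2)]$. All of your computations (including $\mathcal{L}|x'|^{s}=s(s+N-2)|x'|^{s-2}$ and the value $f(\epsilon_\ast)$) check out and reproduce the paper's constant exactly.
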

	\begin{proof}[Proof of Corollary \ref{cor3.2}]
		Let us take $V(x)=|x'|^{-(\alpha+2)}$ in Theorem \ref{rellich}, which can be applied since $x'=0$ is not in the support of $u$. Then we have
		\begin{equation*}
			\nabla_{\G} V = -(\alpha+2)|x'|^{-\alpha-4}x',\qquad 
			\mathcal{L} V = - (\alpha+2)(N-\alpha -4)|x'|^{-(\alpha+4)}.
		\end{equation*}
		Let us set $C_{N,\alpha}:=(\alpha+2)(N-\alpha-4)$. Observing that
		\begin{equation*}
			\mathcal{L}V = -C_{N,\alpha}|x'|^{-(\alpha+4)}<0 ,
		\end{equation*}
		for $|x'| \neq 0$, it follows from \eqref{rellich} that
		\begin{align}\label{6.2.4}
			\int_{\G \backslash \{x'=0\}} \frac{|\mathcal{L} u|^2}{|x'|^{\alpha}} dx &\geq 2 C_{N,\alpha} \epsilon \int_{\G \backslash \{x'=0\}} \frac{|\nabla_{\G} u|^2}{|x'|^{\alpha+2}}dx \nonumber \\
			&+C_{N,\alpha}^2 \epsilon(1-\epsilon) \int_{\G \backslash \{x'=0\}} \frac{|u|^2}{|x'|^{\alpha+4}}dx.
		\end{align}
		To obtain \eqref{6.2.3}, let us apply the $L^p$-Hardy type inequality \eqref{Hardy} by taking $V(x)=|x'|^{\alpha+2}$ for $\alpha \in (-2,N-4)$, so that
		\begin{equation*}
			\int_{\G \backslash \{x'=0\}} \frac{|\nabla_{\G} u|^2}{|x'|^{\alpha+2}} dx \geq \frac{(N-\alpha-4)^2}{4} \int_{\G \backslash \{x'=0\}} \frac{|u|^2}{|x'|^{\alpha+4}}dx,
		\end{equation*}
		and then choosing $\epsilon =(N+\alpha)/4(\alpha+2)$ for \eqref{6.2.4}, which is the choice of $\epsilon$ that gives the maximum right-hand side.
	\end{proof}

We can now formulate the $L^p$-version of weighted $L^p$-Rellich type inequalities.
	
	\begin{thm}\label{Rellic}
		Let $\Omega$ be an admissible domain in a stratified group $\G$. If $0<V \in C(\Omega )$, $\mathcal{L} V <0$, and $\mathcal{L}(V^{\sigma})\leq 0$ on $\Omega$ for some $\sigma >1$, then for all $u \in C_0^{\infty}(\Omega)$ we have
		\begin{equation}\label{rel_eq}
		\left\| |\mathcal{L} V|^{\frac{1}{p}}u \right\|_{L^p(\Omega)} \leq \frac{p^2}{(p-1)\sigma +1} \left\| \frac{V}{|\mathcal{L}V|^{\frac{p-1}{p}}}\mathcal{L}u \right\|_{L^p(\Omega)}, \quad 1\leq p < \infty.
		\end{equation}
			\end{thm}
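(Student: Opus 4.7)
The plan is to adapt the Davies--Hinz $L^p$-Rellich strategy to the stratified setting, combining the two Green's formulas \eqref{g1}, \eqref{g2} with a pointwise gradient bound for $V$ extracted from the auxiliary hypothesis $\mathcal{L}(V^\sigma)\leq 0$, and closing with H\"older's inequality. The role of this auxiliary condition is purely to sharpen the constant: expanding
$$\mathcal{L}(V^\sigma)=\sigma V^{\sigma-1}\mathcal{L}V+\sigma(\sigma-1)V^{\sigma-2}|\nabla_{\G} V|^2\leq 0$$
and using $V>0$, $\sigma>1$ and $\mathcal{L}V<0$, one immediately extracts the pointwise estimate
$$|\nabla_{\G} V|^2\leq \frac{V\,|\mathcal{L}V|}{\sigma-1}.$$

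Next, for $u\in C_0^{\infty}(\Omega)$, taken for the moment real-valued and nonnegative, two applications of the Green's identities (the boundary contributions vanish since $u$ is compactly supported) yield on the one hand the identity
$$\int_{\Omega}|\mathcal{L}V|\,u^p\,dx= -p\int_{\Omega}Vu^{p-1}\mathcal{L}u\,dx-p(p-1)\int_{\Omega}Vu^{p-2}|\nabla_{\G} u|^2\,dx,$$
and on the other hand
$$\int_{\Omega}|\mathcal{L}V|\,u^p\,dx=p\int_{\Omega}u^{p-1}\nabla_{\G} V\cdot\nabla_{\G} u\,dx.$$
In this second identity, pointwise Cauchy--Schwarz together with the above bound on $|\nabla_{\G} V|$ and one further Cauchy--Schwarz via the splitting
$$u^{p-1}|\nabla_{\G} u|\sqrt{V|\mathcal{L}V|}=\bigl(u^{p/2}|\mathcal{L}V|^{1/2}\bigr)\bigl(u^{(p-2)/2}V^{1/2}|\nabla_{\G} u|\bigr)$$
gives, after squaring and cancelling the common factor,
$$\int_{\Omega}|\mathcal{L}V|\,u^p\,dx\leq \frac{p^2}{\sigma-1}\int_{\Omega}Vu^{p-2}|\nabla_{\G} u|^2\,dx.$$

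Feeding this last estimate back into the identity allows the gradient term to be absorbed into $\int_{\Omega}|\mathcal{L}V|u^p\,dx$; using the algebraic identity $1+(p-1)(\sigma-1)/p=((p-1)\sigma+1)/p$ we obtain
$$\frac{(p-1)\sigma+1}{p}\int_{\Omega}|\mathcal{L}V|\,u^p\,dx\leq -p\int_{\Omega}Vu^{p-1}\mathcal{L}u\,dx\leq p\int_{\Omega}Vu^{p-1}|\mathcal{L}u|\,dx.$$
A single application of H\"older's inequality with exponents $p$ and $p/(p-1)$, factoring
$$Vu^{p-1}|\mathcal{L}u|=\frac{V|\mathcal{L}u|}{|\mathcal{L}V|^{(p-1)/p}}\cdot |\mathcal{L}V|^{(p-1)/p}u^{p-1},$$
together with cancellation of $\||\mathcal{L}V|^{1/p}u\|_{L^p(\Omega)}^{p-1}$ from both sides then yields \eqref{rel_eq} with the advertised constant $p^2/((p-1)\sigma+1)$.

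The main technical obstacle is the complex-valued case, in which $|u|^p$ fails to be $C^2$ at the zeros of $u$ and the iterated Green's formula cannot be applied directly. I would handle this exactly as in the proof of Theorem \ref{CKN_thm}, by rerunning the whole computation with the smooth regularisation $\upsilon_\epsilon=(|u|^2+\epsilon^2)^{1/2}-\epsilon$ in place of $u$, controlling the resulting integrands by the a.e.\ bounds $\upsilon_\epsilon\leq |u|$ and $|\nabla_{\G}\upsilon_\epsilon|\leq |\nabla_{\G}|u||\leq |\nabla_{\G}u|$, and passing to the limit $\epsilon\to 0^+$ by dominated convergence.
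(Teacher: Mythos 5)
Your computation for real-valued $u$ is, in substance, the paper's own proof with the two auxiliary lemmas merged into one run: the paper isolates your estimate $\int_\Omega|\mathcal{L}V|\,|u|^p\,dx\le\frac{p^2}{\sigma-1}\int_\Omega V|u|^{p-2}|\nabla_{\G} u|^2\,dx$ as Lemma \ref{lem2} (obtained there by applying the $p=2$ case of \eqref{Hardy} to $(u^2+\epsilon^2)^{p/4}-\epsilon^{p/2}$ rather than by your direct double Cauchy--Schwarz, which is the same computation unwound), and isolates your absorption step as Lemma \ref{lem1} with the abstract constant $C=\frac{(p-1)(\sigma-1)}{p}$, followed by the identical H\"older factorisation. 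The pointwise bound $(\sigma-1)|\nabla_{\G} V|^2\le V|\mathcal{L}V|$, the two integrations by parts, the absorption, and the arithmetic $1+\frac{(p-1)(\sigma-1)}{p}=\frac{(p-1)\sigma+1}{p}$ all coincide with the paper.

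The gap is in your final paragraph, the reduction of the complex-valued (or sign-changing) case. The regularisation $\upsilon_\epsilon=(|u|^2+\epsilon^2)^{1/2}-\epsilon$ together with the bounds $\upsilon_\epsilon\le|u|$ and $|\nabla_{\G}\upsilon_\epsilon|\le|\nabla_{\G}u|$ suffices for Theorem \ref{CKN_thm} because only first derivatives of $u$ appear on the right-hand side there. Here the right-hand side carries $\mathcal{L}u$, and rerunning your identity (a) with $\upsilon_\epsilon$ in place of $u$ produces the term $-p\int_\Omega V\upsilon_\epsilon^{p-1}\mathcal{L}\upsilon_\epsilon\,dx$; the two bounds you list say nothing about $\mathcal{L}\upsilon_\epsilon$, which on expansion contains terms of the form $(|u|^2+\epsilon^2)^{-1/2}|\nabla_{\G}u|^2$ that are not dominated by $|\mathcal{L}u|$. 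The argument can be rescued, either by checking that these extra terms enter $-\mathcal{L}\upsilon_\epsilon$ with a favourable sign, so that pointwise $-\mathcal{L}\upsilon_\epsilon\le(|u|^2+\epsilon^2)^{-1/2}|{\rm Re}(\bar{u}\,\mathcal{L}u)|\le|\mathcal{L}u|$, or, as the paper does, by first reducing to real-valued $u$ via the identity $|z|^p=c_p^{-1}\int_{-\pi}^{\pi}|{\rm Re}(z)\cos\vartheta+{\rm Im}(z)\sin\vartheta|^p\,d\vartheta$, whose integrand is linear in $u$ and therefore commutes with $\mathcal{L}$; but as written the reduction is not justified.
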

	
	Theorem \ref{Rellic} will follow by Lemma \ref{lem2}, by putting $C =\frac{(p-1)(\sigma-1)}{p}$ in Lemma \ref{lem1}.  
	
	\begin{lem}\label{lem1}
		Let $\Omega$ an admissible domain in a   stratified group $\G$. If $V \geq 0$, $\mathcal{L} V <0$, and there exists a constant $C\geq 0$ such that 
		\begin{equation}\label{subs}
		C \left\| |\mathcal{L} V|^{\frac{1}{p}} u \right\|_{L^p(\Omega)}^p \leq p(p-1) \left\|V^{\frac{1}{p}} |u|^{\frac{p-2}{p}}|\nabla_{\G} u|^{\frac{2}{p}}\right \|_{L^p(\Omega)}^p, \quad 1<p<\infty,
		\end{equation}
		for all $u \in C_0^{\infty}(\Omega)$, then we have
		\begin{equation}\label{subs2}
			(1+C)\left\| |\mathcal{L} V|^{\frac{1}{p}} u \right \|_{L^p(\Omega)} \leq p \left\| \frac{V}{|\mathcal{L}V|^{\frac{p-1}{p}}} \mathcal{L} u \right \|_{L^p(\Omega)},
		\end{equation}
		for all $u \in C_0^{\infty}(\Omega)$. If $p=1$ then the statement holds for $C=0$.
	\end{lem}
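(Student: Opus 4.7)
The strategy is to integrate $\int_\Omega|\mathcal{L}V||u|^p\,dx$ twice by parts against $V$, absorb the resulting $|\nabla_{\G}u|^2$ term via the hypothesis \eqref{subs}, and close with H\"older's inequality. I would treat the real-valued case first and extend to complex-valued $u$ by regularization. For real $u\in C_0^\infty(\Omega)$ the chain rule gives the pointwise identity
\begin{equation*}
\mathcal{L}(|u|^p) = p(p-1)|u|^{p-2}|\nabla_{\G}u|^2 + p|u|^{p-2} u\,\mathcal{L}u
\end{equation*}
on $\{u\neq 0\}$. Since $\mathcal{L}V<0$ and the boundary terms in Green's second formula \eqref{g2} vanish for $u\in C_0^\infty(\Omega)$, two applications of Green's identity give
\begin{equation*}
\int_\Omega|\mathcal{L}V||u|^p\,dx = -\int_\Omega V\mathcal{L}(|u|^p)\,dx = -p(p-1)\int_\Omega V|u|^{p-2}|\nabla_{\G}u|^2\,dx - p\int_\Omega V|u|^{p-2} u\,\mathcal{L}u\,dx.
\end{equation*}

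Rewriting this as $\int_\Omega|\mathcal{L}V||u|^p\,dx + p(p-1)\int_\Omega V|u|^{p-2}|\nabla_{\G}u|^2\,dx = -p\int_\Omega V|u|^{p-2} u\,\mathcal{L}u\,dx$ and invoking the hypothesis \eqref{subs} in the form $C\int_\Omega|\mathcal{L}V||u|^p\,dx \leq p(p-1)\int_\Omega V|u|^{p-2}|\nabla_{\G}u|^2\,dx$ yields
\begin{equation*}
(1+C)\int_\Omega|\mathcal{L}V||u|^p\,dx \leq -p\int_\Omega V|u|^{p-2} u\,\mathcal{L}u\,dx \leq p\int_\Omega V|u|^{p-1}|\mathcal{L}u|\,dx,
\end{equation*}
where the last inequality uses $V\geq 0$ and the pointwise bound $-u\,\mathcal{L}u\leq|u||\mathcal{L}u|$. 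Splitting $V|u|^{p-1} = \bigl(V|\mathcal{L}V|^{-(p-1)/p}\bigr)\bigl(|\mathcal{L}V|^{(p-1)/p}|u|^{p-1}\bigr)$ and applying H\"older's inequality with conjugate exponents $p$ and $p/(p-1)$ produces
\begin{equation*}
(1+C)\int_\Omega|\mathcal{L}V||u|^p\,dx \leq p\left\|\tfrac{V}{|\mathcal{L}V|^{(p-1)/p}}\mathcal{L}u\right\|_{L^p(\Omega)}\left(\int_\Omega|\mathcal{L}V||u|^p\,dx\right)^{(p-1)/p},
\end{equation*}
and cancelling the common factor on both sides yields \eqref{subs2}.

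The main technical obstacle is the justification of the identity for $\mathcal{L}(|u|^p)$ at points where $u=0$ (the factor $|u|^{p-2}$ being singular when $1<p<2$) and the extension to complex-valued $u$, where the identity acquires an additional term $p(p-2)|u|^{p-2}|\nabla_{\G}|u||^2$. Both issues I would handle via the smoothing $v_\epsilon = (|u|^2+\epsilon^2)^{1/2}-\epsilon$ already deployed in the proof of Theorem \ref{CKN_thm}: one runs the computation above on $v_\epsilon^p$ and then lets $\epsilon\to 0$, using Kato's inequality $|\nabla_{\G}|u||\leq|\nabla_{\G}u|$ to control the extra term, with the sign of $p-2$ dictating on which side it is absorbed. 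Finally, the endpoint $p=1$ is immediate: the hypothesis \eqref{subs} is vacuous when $C=0$, and the distributional Kato inequality applied to $\mathcal{L}|u|$ together with a single integration by parts yields $\int_\Omega|\mathcal{L}V||u|\,dx\leq\int_\Omega V|\mathcal{L}u|\,dx$, which is exactly \eqref{subs2} in the case $p=1$.
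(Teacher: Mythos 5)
Your real-valued argument is exactly the paper's: regularize $|u|^p$, integrate by parts twice using Green's formula with vanishing boundary terms, absorb the term $p(p-1)\int_\Omega V|u|^{p-2}|\nabla_{\G}u|^2\,dx$ via the hypothesis \eqref{subs}, and close with H\"older applied to the factorization $V|u|^{p-1}=\bigl(V|\mathcal{L}V|^{-(p-1)/p}\bigr)\bigl(|\mathcal{L}V|^{(p-1)/p}|u|^{p-1}\bigr)$. The computation, the role of the sign of $\mathcal{L}V$, and the final cancellation all match the paper's proof step for step; the paper merely uses the regularization $u_\epsilon=(|u|^2+\epsilon^2)^{p/2}-\epsilon^p$ instead of your $v_\epsilon^p$ (an immaterial difference) and justifies the passage $\epsilon\to0$ by dominated convergence on the right and Fatou on the left, as you would also need to do.

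The one genuine gap is your treatment of complex-valued $u$. For complex $u$ the identity becomes $\mathcal{L}(|u|^p)=p(p-2)|u|^{p-2}|\nabla_{\G}|u||^2+p|u|^{p-2}\bigl(|\nabla_{\G}u|^2+\mathrm{Re}(\overline{u}\,\mathcal{L}u)\bigr)$, so to invoke \eqref{subs} you must bound $p(p-2)|\nabla_{\G}|u||^2+p|\nabla_{\G}u|^2$ from below by $p(p-1)|\nabla_{\G}u|^2$. Kato's inequality $|\nabla_{\G}|u||\le|\nabla_{\G}u|$ accomplishes this only when $p\le 2$ (where $p-2\le 0$); for $p>2$ it points the wrong way, and the needed bound actually fails: if $u$ has locally constant modulus then $\nabla_{\G}|u|=0$ while $\nabla_{\G}u\neq 0$, and the left-hand side reduces to $p|\nabla_{\G}u|^2<p(p-1)|\nabla_{\G}u|^2$. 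So ``the sign of $p-2$ dictating on which side it is absorbed'' does not rescue the case $p>2$. The paper avoids this entirely by reducing to real-valued $u$ at the very start, via the averaging identity $|z|^p=\bigl(\int_{-\pi}^{\pi}|\cos\vartheta|^p\,d\vartheta\bigr)^{-1}\int_{-\pi}^{\pi}|\mathrm{Re}(z)\cos\vartheta+\mathrm{Im}(z)\sin\vartheta|^p\,d\vartheta$ from Davies, applied to $u_\vartheta:=\mathrm{Re}(u)\cos\vartheta+\mathrm{Im}(u)\sin\vartheta$ and integrated in $\vartheta$; you should do the same. Your $p=1$ endpoint via the distributional Kato inequality is fine.
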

	
	\begin{proof}[Proof of Lemma \ref{lem1}] We can assume that $u$ is real-valued by using the following identity (see \cite[p. 176]{Davies}):
		\begin{equation*}
			\forall z \in \mathbb{C}: |z|^p= \left( \int_{-\pi}^{\pi} |\cos{\vartheta}|^p d\vartheta \right)^{-1} \int_{\pi}^{-\pi} |{\rm Re}(z)\cos{\vartheta}+{\rm Im}(z)\sin{\vartheta}|^{p} d \vartheta ,
		\end{equation*}
	which can be  proved by writing $z=r(\cos{\phi}+i\sin{\phi})$ and simplifying. 
		
		Let  $\epsilon >0$ and set $u_{\epsilon}:=(|u|^2+\epsilon^2)^{p/2}-\epsilon^p$. Then $0\leq u_{\epsilon} \in C_0^{\infty}$ and
		\begin{align*}
			\int_{\Omega}|\mathcal{L}V|u_{\epsilon} dx = - \int_{\Omega} (\mathcal{L}V)u_{\epsilon} dx = - \int_{\Omega}V \mathcal{L}u_{\epsilon} dx,		
		\end{align*}
		where
		\begin{align*}
			\mathcal{L}u_{\epsilon} =& \mathcal{L} \left( (|u|^2+\epsilon^2)^{\frac{p}{2}}-\epsilon^p \right)= \nabla_{\G} \cdot (\nabla_{\G}((|u|^2+\epsilon^2)^{\frac{p}{2}}-\epsilon^p))\\
			=& \nabla_{\G}(p(|u|^2+\epsilon^2)^{\frac{p-2}{2}} u \nabla_{\G}u)\\
			=& p(p-2)(|u|^2+\epsilon^2)^{\frac{p-4}{2}}u^2|\nabla_{\G}u|^2 +p(|u|^2+\epsilon^2)^{\frac{p-2}{2}}|\nabla_{\G}u|^2 
			+ p(|u|^2+\epsilon^2)^{\frac{p-2}{2}}u\mathcal{L}u.
		\end{align*}
		Then
		\begin{align*}
			\int_{\Omega}|\mathcal{L}V|u_{\epsilon} dx &= -\int_{\Omega}\left(p(p-2)u^2(u^2+\epsilon^2)^{\frac{p-4}{2}}+p(u^2+\epsilon^2)^{\frac{p-2}{2}}\right)V|\nabla_{\G}u|^2dx \\
			& \qquad - p\int_{\Omega} V u(u^2+\epsilon^2)^{\frac{p-2}{2}}\mathcal{L}udx.
		\end{align*}
		Hence
		\begin{align*}
			\int_{\Omega}  |\mathcal{L}V|u_{\epsilon} &+ \left( p(p-2)u^2(u^2+\epsilon^2)^{\frac{p-4}{2}}+p(u^2+\epsilon^2)^{\frac{p-2}{2}}\right)V|\nabla_{\G}u|^2 dx \\
			& \leq p\int_{\Omega} V|u|(u^2+\epsilon^2)^{\frac{p-2}{2}}|\mathcal{L}u|dx.
		\end{align*}
		When $\epsilon \rightarrow 0$, the integrand on the right is bounded by
		$V(\max|u|^2+1)^{(p-1)/2}\max|\mathcal{L}u|$ and it is integrable because $u \in C_0^{\infty}(\Omega)$, and so the integral tends to
		$\int_{\Omega}V|u|^{p-1}|\mathcal{L}u|dx$ by the dominated convergence theorem. The integrand on the left is non-negative and tends to
		$ |\mathcal{L} V||u|^p + p(p-1)V|u|^{p-2}|\nabla_{\G}u|^2 $
		pointwise, only for $u \neq 0$ when $p<2$, otherwise for any $x$. It then follows by Fatou's lemma that
		\begin{align*}
			\left\| |\mathcal{L}V|^{\frac{1}{p}} u \right\|_{L^p(\Omega)}^p + p(p-1)\left\| V^{\frac{1}{p}}|u|^{\frac{p-2}{p}}|\nabla_{\G}u|^{\frac{2}{p}} \right\|_{L^p(\Omega)}^p 
			\leq	p \left\| V^{\frac{1}{p}} |u|^{\frac{p-1}{p}} |\mathcal{L}u|^{\frac{1}{p}}\right\|_{L^p(\Omega)}^p.
		\end{align*}
		By using \eqref{subs}, followed by the H\"older inequality, we obtain
		\begin{align*}
			(1+C)\left\| |\mathcal{L}V|^{\frac{1}{p}} u\right\|_{L^p(\Omega)}^p & \leq p \left\| |\mathcal{L}V|^{(p-1)}V^{\frac{1}{p}}|u|^{\frac{p-1}{p}}|\mathcal{L}V|^{-(p-1)} |\mathcal{L} u|^{\frac{1}{p}} \right\|^p \\
			&\leq p \left\| |\mathcal{L}V|^{\frac{1}{p}} u \right\|_{L^p(\Omega)}^{p-1} \left\| \frac{|V|}{|\mathcal{L}V|^{\frac{p-1}{p}}} \mathcal{L} u \right\|_{L^p(\Omega)}.
		\end{align*}
		This implies \eqref{subs2}.
	\end{proof}
	
	\begin{lem}\label{lem2}
		Let $\Omega$ be an admissible domain in a   stratified group $\G$. If $0<V\in C(\Omega)$, $\mathcal{L} V <0$, and $\mathcal{L}V^{\sigma} \leq 0$ on $\Omega$ for some $\sigma >1$, then we have
		\begin{equation}
		(\sigma -1) \int_{\Omega} |\mathcal{L}V||u|^p dx \leq p^2 \int_{\{x\in\Omega, u(x) \neq 0\}} V|u|^{p-2}|\nabla_{\G} u|^2 dx < \infty, \quad 1<p<\infty,
		\end{equation}
		for all $u \in C_0^{\infty}(\Omega)$.
	\end{lem}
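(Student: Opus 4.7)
The plan is to extract a sharp pointwise inequality from the structural hypothesis $\mathcal{L}V^\sigma \leq 0$ and then close a Cauchy--Schwarz loop after integration by parts against a smooth regularization of $|u|^p$. First, I would unpack $\mathcal{L}V^\sigma$ using the chain rule for $\mathcal{L}$:
\[
\mathcal{L}V^\sigma = \sigma V^{\sigma-1}\mathcal{L}V + \sigma(\sigma-1)V^{\sigma-2}|\nabla_\G V|^2.
\]
Combined with $V > 0$, $\sigma > 1$, and $\mathcal{L}V < 0$, the assumption $\mathcal{L}V^\sigma \leq 0$ yields the pointwise bound
\[
(\sigma - 1)|\nabla_\G V|^2 \leq V|\mathcal{L}V| \quad \text{on } \Omega,
\]
which is the engine of the rest of the argument.

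Next, borrowing the regularization used in the proof of Lemma \ref{lem1}, I would fix $u \in C_0^\infty(\Omega)$ and set $\phi_\epsilon := (u^2 + \epsilon^2)^{p/2} - \epsilon^p$ for $\epsilon > 0$. Then $\phi_\epsilon \geq 0$ is smooth, vanishes wherever $u$ does, and is compactly supported in $\operatorname{supp} u$. Applying Green's first formula \eqref{g1} (with no boundary contribution) gives
\[
\int_\Omega |\mathcal{L}V|\phi_\epsilon\, dx = \int_\Omega \nabla_\G V \cdot \nabla_\G \phi_\epsilon\, dx = p\int_\Omega (u^2+\epsilon^2)^{(p-2)/2}\,u\, \nabla_\G V \cdot \nabla_\G u\, dx.
\]
Estimating $\nabla_\G V \cdot \nabla_\G u \leq |\nabla_\G V||\nabla_\G u|$ and applying Cauchy--Schwarz with the split
\[
\Bigl(\tfrac{|\nabla_\G V|}{V^{1/2}}(u^2+\epsilon^2)^{p/4}\Bigr) \cdot \Bigl(V^{1/2}(u^2+\epsilon^2)^{(p-4)/4}|u||\nabla_\G u|\Bigr),
\]
and then inserting the pointwise bound above into the first factor, would yield
\[
\int_\Omega |\mathcal{L}V|\phi_\epsilon \, dx \leq \tfrac{p}{\sqrt{\sigma-1}}\Bigl(\int_\Omega |\mathcal{L}V|(u^2+\epsilon^2)^{p/2}\,dx\Bigr)^{1/2}\Bigl(\int_\Omega V(u^2+\epsilon^2)^{(p-4)/2}u^2|\nabla_\G u|^2\,dx\Bigr)^{1/2}.
\]

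Finally, I would let $\epsilon \to 0^+$. Both $\phi_\epsilon$ and $(u^2+\epsilon^2)^{p/2}$ converge to $|u|^p$, while $(u^2+\epsilon^2)^{(p-4)/2}u^2 \to |u|^{p-2}$ on $\{u \neq 0\}$ and equals $0$ on $\{u=0\}$; monotonicity in $\epsilon$ (with sign depending on whether $p \le 2$ or $p \ge 2$) justifies passage to the limit by monotone convergence, producing
\[
\int_\Omega |\mathcal{L}V||u|^p\, dx \leq \frac{p}{\sqrt{\sigma-1}}\Bigl(\int_\Omega |\mathcal{L}V||u|^p\, dx\Bigr)^{1/2}\Bigl(\int_{\{u\ne 0\}} V|u|^{p-2}|\nabla_\G u|^2\, dx\Bigr)^{1/2}.
\]
Dividing by the common factor delivers the required $(\sigma - 1)\int|\mathcal{L}V||u|^p\,dx \leq p^2 \int_{\{u\neq 0\}} V|u|^{p-2}|\nabla_\G u|^2\, dx$. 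Finiteness of the right-hand side is routine: $V$ and $|\nabla_\G u|$ are bounded on the compact set $\operatorname{supp} u$, and the weight $|u|^{p-2}$ remains locally integrable against $|\nabla_\G u|^2$ near zeros of the smooth function $u$ whenever $p > 1$.

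The main obstacle is the rigorous treatment of the case $1 < p < 2$, where $|u|^{p-2}$ is singular on $\{u=0\}$; this is precisely what the $\phi_\epsilon$-regularization sidesteps, keeping every integral well-defined during the integration by parts and Cauchy--Schwarz steps, so that the singular integrand emerges only in the limit and only over $\{u \ne 0\}$, exactly as in the statement.
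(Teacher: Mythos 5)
Your argument is correct and produces the stated constant, but it is organized differently from the paper's. Both proofs hinge on the same pointwise consequence of $\mathcal{L}V^{\sigma}\le 0$, namely $(\sigma-1)|\nabla_{\G}V|^{2}\le V|\mathcal{L}V|$. From there the paper proceeds in two stages: it first settles the case $p=2$ by feeding this bound into the already-established weighted $L^{2}$-Hardy inequality \eqref{Hardy} with weight $V$, obtaining $(\sigma-1)\int_{\Omega}|\mathcal{L}V||u|^{2}dx\le 4\int_{\Omega}V|\nabla_{\G}u|^{2}dx$, and then deduces the general case by substituting $v_{\epsilon}=(u^{2}+\epsilon^{2})^{p/4}-\epsilon^{p/2}$ into that $p=2$ inequality and letting $\epsilon\to 0$. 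You instead run a single self-contained pass: integrate $|\mathcal{L}V|$ against $(u^{2}+\epsilon^{2})^{p/2}-\epsilon^{p}$ by parts and apply Cauchy--Schwarz with a $V^{1/2}$-weighted split before inserting the pointwise bound. Since \eqref{Hardy} is itself proved by exactly this integrate-by-parts-plus-H\"older mechanism (Theorem \ref{CKN_thm}), your proof is essentially the paper's two steps composed into one; what it buys is independence from Theorem \ref{CKN_thm}, at the cost of redoing that computation inline.

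Three small points to tighten. First, your Cauchy--Schwarz factor $\int_{\Omega}|\mathcal{L}V|(u^{2}+\epsilon^{2})^{p/2}dx$ is taken over all of $\Omega$, where $(u^{2}+\epsilon^{2})^{p/2}\ge\epsilon^{p}>0$; the hypotheses do not give $\mathcal{L}V\in L^{1}(\Omega)$, so this factor could a priori be infinite. The integrand you are actually estimating vanishes off $\operatorname{supp}u$, so all integrals should be restricted to $\operatorname{supp}u$ (where the continuous function $|\mathcal{L}V|$ is bounded), after which the passage $\epsilon\to 0$ and the final cancellation of $\bigl(\int|\mathcal{L}V||u|^{p}dx\bigr)^{1/2}$ are harmless. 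Second, the monotonicity in $\epsilon$ of the factor $(u^{2}+\epsilon^{2})^{(p-4)/2}u^{2}$ switches at $p=4$, not $p=2$ (monotone convergence for $p<4$, dominated convergence for $p\ge 4$); this does not affect the conclusion. Third, your closing claim that the right-hand side is finite for $1<p<2$ is asserted rather than proved --- the zero set of a smooth $u$ need not be a nice hypersurface, so ``local integrability of $|u|^{p-2}$ against $|\nabla_{\G}u|^{2}$'' requires an argument. The paper's proof does not establish finiteness within this lemma either; it is only recovered a posteriori from the computation in Lemma \ref{lem1}, where $p(p-1)\int_{\Omega}V|u|^{p-2}|\nabla_{\G}u|^{2}dx$ is dominated by the manifestly finite quantity $p\int_{\Omega}V|u|^{p-1}|\mathcal{L}u|dx$.
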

	\begin{proof}[Proof of Lemma \ref{lem2}]
		We shall use that
		\begin{equation}\label{6.3.5}
		0 \geq \mathcal{L}(V^{\sigma}) = \sigma V^{\sigma-2}\left( (\sigma-1)|\nabla_{\G} V|^2+V\mathcal{L}V\right),
		\end{equation}
		and hence
		\begin{equation*}
			(\sigma -1)|\nabla_{\G}V|^2 \leq V |\mathcal{L}V|.
		\end{equation*}
		Now we use the inequality \eqref{Hardy} for $p=2$ to get
		\begin{align}\label{6.3.6}
			(\sigma -1)\int_{\Omega} |\mathcal{L} V||u|^2 dx &\leq 4(\sigma -1)\int_{\Omega} \frac{|\nabla_{\G} V|^2}{|\mathcal{L} V|} |\nabla_{\G}u|^2 dx \nonumber \\
			& \leq 4 \int_{\Omega} V |\nabla_{\G}u|^2 dx = 4 \int_{\{x\in \Omega; u(x)\neq 0, |\nabla_{\G}u|\neq 0 \}} V |\nabla_{\G} u|^2 dx,
		\end{align}
		the last equality valid since $|\{x \in \Omega; u(x)=0, |\nabla_{\G} u| \neq 0 \}|=0$. This proves Lemma \ref{lem2} for $p=2$.
		
		For $p \neq 2$, put $v_{\epsilon}=(u^2 +\epsilon^2)^{p/4}-\epsilon^{p/2}$, and let $\epsilon \rightarrow 0$. Since $0\leq v_{\epsilon}\leq |u|^{\frac{p}{2}}$, the left-hand side of \eqref{6.3.6}, with $u$ replaced by $v_{\epsilon}$, tends to $(\sigma -1)\int_{\Omega}|\mathcal{L}V||u|^pdx$ by the dominated convergence theorem. If $u \neq 0$, then
		\begin{align*}
			|\nabla_{\G}v_{\epsilon}|^2 V =\left| \frac{p}{2}u(u^2+\epsilon^2)^{\frac{p-4}{4}}\nabla_{\G}u\right|^2 V.
		\end{align*}
		For $\epsilon \rightarrow 0$ we obtain
		\begin{equation*}
			|\nabla_{\G}u|^p V =	 \frac{p^2}{4} |u|^{p-2}|\nabla_{\G}u|^2 V.		
		\end{equation*}
		It follows as in the proof of Lemma \ref{lem1}, by using Fatou's lemma, that the right-hand side of \eqref{6.3.6} tends to
		\begin{equation*}
			p^2\int_{\{x\in\Omega; u(x) \neq 0,|\nabla_{\G}u|\neq 0\}} V |u|^{p-2}|\nabla_{\G}u|^2dx,
		\end{equation*}
		and this completes the proof.
	\end{proof}
	
	\begin{cor}\label{4.6}
		Let $\G$ be a  stratified group with $N$ being the dimension of the first stratum. Then for any  $2<\alpha <N$  and all $u \in C_0^{\infty}(\G \backslash\{x'=0\})$ we have  the inequality
		\begin{equation}\label{123}
		\int_{\G} \frac{|u|^p}{|x'|^{\alpha}} dx \leq C^p_{(N,p,\alpha)} \int_{\G} \frac{|\mathcal{L}u|^p}{|x'|^{\alpha -2p}}dx,
		\end{equation}
		where
		\begin{equation}\label{3.10}
		C_{(N,p,\alpha)} = \frac{p^2}{(N-\alpha)\left((p-1)N+\alpha -2p\right)}.
		\end{equation}
	\end{cor}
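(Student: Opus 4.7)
The plan is to apply Theorem~\ref{Rellic} to the specific weight $V(x) = |x'|^{2-\alpha}$. Using the standard identity $\mathcal{L}(|x'|^\beta) = \beta(\beta + N - 2)|x'|^{\beta - 2}$ on $\G \setminus \{x'=0\}$ (which holds because $|x'|^\beta$ depends only on the first-stratum variables, so the higher-stratum corrections in \eqref{Xk0} contribute nothing), I obtain
$$\mathcal{L} V = (2 - \alpha)(N - \alpha)|x'|^{-\alpha}.$$
Under the assumption $2 < \alpha < N$ the two factors have opposite signs, so $\mathcal{L} V < 0$ on the support of $u$, verifying the first hypothesis of Theorem~\ref{Rellic}.

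Next I would check the subsolution condition $\mathcal{L}(V^\sigma) \leq 0$ and optimize $\sigma$. Since $V^\sigma = |x'|^{\sigma(2-\alpha)}$, the same identity gives
$$\mathcal{L}(V^\sigma) = \sigma(2-\alpha)\bigl[\sigma(2-\alpha) + N - 2\bigr]|x'|^{\sigma(2-\alpha) - 2}.$$
As the factor $\sigma(2-\alpha)$ is negative, the condition $\mathcal{L}(V^\sigma)\leq 0$ is equivalent to $\sigma \leq (N-2)/(\alpha - 2)$. The constant $p^2/[(p-1)\sigma + 1]$ in \eqref{rel_eq} is decreasing in $\sigma$, so I would choose the largest admissible value $\sigma = (N-2)/(\alpha-2)$.

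Finally I would unwind \eqref{rel_eq}. A direct computation yields
$$|\mathcal{L}V|^{1/p} = [(\alpha-2)(N-\alpha)]^{1/p}|x'|^{-\alpha/p}, \qquad \frac{V}{|\mathcal{L}V|^{(p-1)/p}} = \frac{|x'|^{(2p-\alpha)/p}}{[(\alpha-2)(N-\alpha)]^{(p-1)/p}},$$
so raising \eqref{rel_eq} to the $p$th power produces \eqref{123} with
$$C_{(N,p,\alpha)}^{p} = \frac{p^{2p}}{\bigl[(\alpha-2)(N-\alpha)\,((p-1)\sigma+1)\bigr]^p}.$$
Plugging in $\sigma = (N-2)/(\alpha-2)$ collapses the inner bracket:
$$(\alpha-2)\bigl((p-1)\sigma+1\bigr) = (p-1)(N-2) + (\alpha-2) = (p-1)N + \alpha - 2p,$$
which yields the constant in \eqref{3.10}. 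There is no real obstacle: the argument is essentially a direct substitution into Theorem~\ref{Rellic}, and the only point requiring care is the optimal choice of $\sigma$ needed to produce the stated sharp-looking denominator.
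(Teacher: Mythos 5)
Your proposal is correct and follows essentially the same route as the paper: substitute $V=|x'|^{2-\alpha}=|x'|^{-(\alpha-2)}$ into Theorem~\ref{Rellic}, verify $\mathcal{L}V<0$ for $2<\alpha<N$, and take $\sigma=(N-2)/(\alpha-2)$ so that $(\alpha-2)\bigl((p-1)\sigma+1\bigr)=(p-1)N+\alpha-2p$. The only difference is that you explicitly verify the admissibility of $\sigma$ via $\mathcal{L}(V^{\sigma})\leq 0$ and explain why this choice is optimal, details the paper leaves implicit.
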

	\begin{proof}[Proof of Corollary \ref{4.6}]
		Let us choose $V=|x'|^{-(\alpha -2)}$  in Theorem \ref{Rellic}, so that
		\begin{equation*}
			\mathcal{L} V  = -(\alpha-2)(N-\alpha)|x'|^{-\alpha}, 
		\end{equation*}
		and we note that when $2<\alpha<N$, we have 
		$
			\mathcal{L} V < 0
		$
		for $|x'|\neq 0$. Now it follows from \eqref{rel_eq} that 
		\begin{align}
			(\alpha-2)^p(N-\alpha)^p \int_{\G} \frac{|u|^p}{|x'|^{\alpha}} dx \leq \frac{p^{2p}}{[(p-1)\sigma+1]^p} \int_{\G} \frac{|\mathcal{L}u|^p}{|x'|^{\alpha-2p}}dx. 
		\end{align}
		By taking $\sigma=(N-2)/(\alpha -2)$, we arrive at 
		\begin{equation*}
			\int_{\G} \frac{|u|^p}{|x'|^{\alpha}} dx \leq  \frac{p^{2p}}{(N-\alpha)^p\left((p-1)N+\alpha -2p\right)^p} \int_{\G} \frac{|\mathcal{L}u|^p}{|x'|^{\alpha -2p}}dx,
		\end{equation*}
		which proves \eqref{123}--\eqref{3.10}.
	\end{proof}
	\begin{cor}\label{Kombe}
		Let $\G$ be a stratified Lie group and let $d=\varepsilon^{\frac{1}{2-Q}}$, where $\varepsilon$ is the fundamental solution of the sub-Laplacian $\mathcal{L}$. Assume that $Q\geq 3$, $\alpha <2$, and $Q+\alpha-4>0$. Then for all $u \in C_0^{\infty}(\G \backslash \{0\})$ we have		\begin{align}\label{Rel_Kombe}
			\frac{(Q+\alpha-4)^2(Q-\alpha)^2}{16} \int_{\G} d^{\alpha-4} |\nabla_{\G} d|^2 |u|^2 dx \leq \int_{\G} \frac{d^{\alpha}}{|\nabla_{\G}d|^2} |\mathcal{L} u|^2 dx.
		\end{align}
	\end{cor}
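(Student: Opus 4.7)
The plan is to apply Theorem~\ref{Rellic} with $p=2$ to the potential $V = d^{\alpha-2}$, where $d = \varepsilon^{1/(2-Q)}$ is the $\mathcal{L}$-gauge. Since $u \in C_0^{\infty}(\G \setminus \{0\})$ has compact support avoiding the singularity of $d$, there is an admissible domain $\Omega$ containing $\mathrm{supp}(u)$ on which $V \in C^{\infty}(\Omega)$ is strictly positive, so the hypotheses of Theorem~\ref{Rellic} are meaningful. Both integrals in \eqref{Rel_Kombe} reduce to integrals over $\Omega$, so it suffices to prove the inequality there.

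The key ingredient is the identity derived in the proof of Corollary~\ref{fund_ineq}, namely
\[
\mathcal{L}(d^{\beta}) = \beta(\beta+Q-2)\, d^{\beta-2}\, |\nabla_{\G} d|^2,
\]
valid for any real $\beta$ wherever $d>0$. Applied with $\beta = \alpha-2$ this yields
\[
\mathcal{L} V = (\alpha-2)(Q+\alpha-4)\, d^{\alpha-4}\,|\nabla_{\G} d|^2,
\]
which is strictly negative under the hypotheses $\alpha < 2$ and $Q+\alpha-4 > 0$. For the second hypothesis of Theorem~\ref{Rellic}, I would choose the critical exponent
\[
\sigma = \frac{Q-2}{2-\alpha},
\]
which satisfies $\sigma > 1$ precisely under the assumption $Q+\alpha-4 > 0$. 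Applying the same identity to $V^{\sigma} = d^{(\alpha-2)\sigma}$, the exponent $(\alpha-2)\sigma + Q-2$ vanishes at this choice of $\sigma$, so $\mathcal{L}(V^{\sigma}) = 0 \leq 0$, as required.

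With these choices, squaring inequality \eqref{rel_eq} for $p=2$ gives
\[
\int_{\Omega} |\mathcal{L} V|\, |u|^2\, dx \leq \frac{16}{(\sigma+1)^2} \int_{\Omega} \frac{V^2}{|\mathcal{L} V|}\, |\mathcal{L} u|^2\, dx.
\]
Substituting the expressions for $|\mathcal{L} V|$ and $V^2/|\mathcal{L} V|$ produces the weights $d^{\alpha-4}|\nabla_{\G} d|^2$ on the left and $d^{\alpha}/|\nabla_{\G} d|^2$ on the right, and the resulting constant on the left-hand side after rearrangement is
\[
\frac{\bigl[(2-\alpha)(Q+\alpha-4)(\sigma+1)\bigr]^2}{16}.
\]
Using the identity $(2-\alpha)(\sigma+1) = Q-\alpha$ coming from the choice of $\sigma$, this simplifies exactly to $(Q+\alpha-4)^2(Q-\alpha)^2/16$, yielding \eqref{Rel_Kombe}.

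The only genuinely nontrivial step is identifying the critical exponent $\sigma = (Q-2)/(2-\alpha)$ making $V^{\sigma}$ sub-$\mathcal{L}$-harmonic (in fact $\mathcal{L}$-harmonic away from the origin) while simultaneously delivering the sharp-looking constant; once this choice is made, the rest of the argument is a direct substitution into Theorem~\ref{Rellic}, parallel to the proof of Corollary~\ref{4.6} for the Euclidean-like weight $|x'|^{-(\alpha-2)}$.
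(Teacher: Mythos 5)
Your proof is correct and follows essentially the same route as the paper: apply Theorem \ref{Rellic} with $p=2$ and $V=d^{\alpha-2}$, use the identity $\mathcal{L}(d^{\beta})=\beta(\beta+Q-2)d^{\beta-2}|\nabla_{\G}d|^{2}$, and choose the critical $\sigma$ that makes $V^{\sigma}=d^{2-Q}=\varepsilon$ harmonic away from the origin. In fact your $\sigma=(Q-2)/(2-\alpha)$ is the correct positive value satisfying $\sigma>1$ exactly when $Q+\alpha-4>0$; the paper prints $\sigma=(Q-2\alpha+2)/(\alpha-2)$, which is negative under the stated hypotheses and appears to be a sign slip, though it formally yields the same factor $(\alpha-2)^{2}(\sigma+1)^{2}=(Q-\alpha)^{2}$ and hence the same constant.
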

	The inequality \eqref{Rel_Kombe} was obtained by Kombe \cite{Kombe:Rellich-Carnot-2010}, but now we get it as an immediate consequence of Theorem \ref{Rellic}. 
	\begin{proof}[Proof of Corollary \ref{Kombe}]
		Let us choose $V=d^{\alpha -2}$  in Theorem \ref{Rellic}. Then  
		\begin{equation*}
			\mathcal{L} V = (\alpha-2)(Q+\alpha-4)d^{\alpha-4}|\nabla_{\G}d|^2.
		\end{equation*}
		Note that for $Q+\alpha-4>0$ and  $\alpha <2$, we have
		$
			\mathcal{L}V<0$ for all $x\neq 0.
		$
		If $p=2$ then from \eqref{rel_eq} it follows that 
		\begin{equation*}
			(\alpha-2)^2(Q+\alpha-4)^2\int_{\G} d^{\alpha-4}|\nabla_{\G} d|^2 |u|^2 dx \leq \frac{16}{(\sigma+1)^2}\int_{\G} \frac{d^{\alpha}}{|\nabla_{\G}d|^2} |\mathcal{L}u|^2  dx.   
		\end{equation*}
		By taking  $\sigma=(Q-2\alpha+2)/(\alpha -2)$ we get 
		\begin{align*}
			\frac{(Q+\alpha-4)^2(Q-\alpha)^2}{16} \int_{\G} d^{\alpha-4} |\nabla_{\G} d|^2 |u|^2 dx \leq \int_{\G} \frac{d^{\alpha}}{|\nabla_{\G}d|^2} |\mathcal{L} u|^2 dx,
		\end{align*}
		proving inequality \eqref{Rel_Kombe}.
	\end{proof}
	\begin{rem}\label{classic Rellich}
		In the abelian case, when $\G \equiv (\mathbb{R}^n,+)$ with $d =|x|$ being the Euclidean norm, and $\alpha=0$ in inequality \eqref{Rel_Kombe}, we recover the classical Rellich inequality \cite{Rellich}.
	\end{rem}

\end{document}